\newtheorem{theorem}{Theorem}[section]
\newtheorem{lemma}[theorem]{Lemma}
\newtheorem{remark}[theorem]{Remark}
\newtheorem{definition}[theorem]{Definition}
 \DeclareMathOperator{\ran}{ran}
\newcommand{\R}{\mathbb R}
\newcommand{\sph}{\mathbb S}
\newcommand{\N}{\mathbb N}
\newcommand{\X}{\mathbb{X}}
\newcommand{\Y}{\mathbb{Y}}
\newcommand{\dom}{\mathbb{D}}
\newcommand{\ZZ}{\mathbb Z}
\newcommand{\XX}{\mathbb X}
\newcommand{\YY}{\mathbb Y}
\newcommand{\QQ}{\mathbb Q}
\newcommand{\WW}{\mathbb W}
\newcommand{\Mo}{\mathbf M}
\newcommand{\Ao}{\mathbf  A}
\newcommand{\Fo}{\mathbf  F}
\newcommand{\Ho}{\mathbf H}
\newcommand{\Uo}{\mathbf U}
\newcommand{\Io}{\mathbf I}
\newcommand{\II}{\boldsymbol{\chi}}
\newcommand{\Po}{\mathbf P}
\newcommand{\rmd}{\mathrm d}
\newcommand{\eps}{\epsilon}
\newcommand{\edot}{\,\cdot\,}
\newcommand\abs[1]{\left\vert#1\right\vert}
\newcommand\norm[1]{\left\Vert#1\right\Vert}
\newcommand\snorm[1]{\Vert#1\Vert}
\newcommand\set[1]{\left\{#1\right\}}
\newcommand\sset[1]{\{#1\}}
\newcommand{\om}{\omega}
\newcommand{\la}{\lambda}
\newcommand{\mua}{\mu_a}
\newcommand{\mus}{\mu_s}
\newcommand{\data}{v}
\newcommand{\nill}{N}
\newcommand{\qi}{q_i}
\newcommand{\qb}{q_o}
\newcommand{\kl}[1]{\left(#1\right)}
\newcommand{\skl}[1]{(#1)}
\newcommand\inner[2]{\left\langle#1,#2\right\rangle}
\newcommand{\req}[1]{(\ref{eq:#1})}
\newcommand{\Om}{\Omega}
\DeclareMathOperator*{\argmin}{arg\,min}
\newcommand{\tfun}{T}
\newcommand{\rfun}{R}
\newcommand{\Lo}{\mathbf  L}
\newcommand{\To}{\mathbf  T}
\newcommand{\Ko}{\mathbf K}
\newcommand{\Jo}{\mathbf J}
\newcommand{\wave}{\mathbf U}
\newcommand{\Fr}{F}
\newcommand{\Gr}{G}
\newcommand{\coloneqq}{:=}
\newcommand{\prox}{\operatorname{prox}}
\numberwithin{equation}{section}
\numberwithin{figure}{section}
\numberwithin{theorem}{section}
\newcommand{\cmaa}[1]{#1}
\tikzstyle{decision} = [diamond, draw, fill=green!20, 
\tikzstyle{block} = [rectangle, draw, fill=green!20, 
\tikzstyle{sblock} = [rectangle, draw, fill=green!20, 
\tikzstyle{line} = [draw, -latex']
\tikzstyle{cloud} = [draw, ellipse,fill=red!20, node distance=3.5cm,
\title{Stochastic Proximal Gradient Algorithms for  Multi-Source Quantitative Photoacoustic Tomography}
\author{Simon Rabanser}
\affil{Department of Mathematics, University of Innsbruck\\
Technikerstra{\ss}e 13, 6020 Innsbruck, Austria\\
E-mail: {\tt simon.rabanser@uibk.ac.at}}
\author{Lukas Neumann}
\affil{Institute of Basic Sciences in Engineering Science, University of Innsbruck\\
Technikerstra{\ss}e 13, 6020 Innsbruck, Austria\\
E-mail: {\tt lukas.neumann@uibk.ac.at}}
\author{Markus Haltmeier}
\affil{Department of Mathematics, University of Innsbruck\\
Technikerstra{\ss}e 13, 6020 Innsbruck, Austria\\
E-mail: {\tt markus.haltmeier@uibk.ac.at}}
\date{}
\begin{document}

\maketitle

 \begin{abstract}
 The development of accurate and efficient image reconstruction algorithms  is a central aspect of quantitative photoacoustic tomography (QPAT). In this paper, we address this issues for multi-source QPAT using the radiative
transfer equation (RTE) as accurate model for light transport. The tissue  parameters are jointly reconstructed from the acoustical data
measured for each of the applied sources. We develop stochastic proximal gradient methods for multi-source QPAT,
which are more efficient  than  standard proximal gradient methods in which a
single iterative update has complexity  proportional to the number applies sources. Additionally, we introduce a
completely new  formulation of QPAT as multilinear (MULL)  inverse problem which avoids explicitly solving the RTE. The MULL formulation of QPAT is again addressed with stochastic proximal gradient methods. Numerical results for both approaches  are presented. Besides the introduction of stochastic proximal gradient algorithms to QPAT, we consider the new MULL formulation of QPAT as main contribution of this paper.

\medskip\noindent
\textbf{Keywords:} Photoacoustic tomography; image reconstruction; radiative transfer equation; 
multilinear inverse problem; limited view; stochastic gradient method; limited data; Dykstra algorithm

\end{abstract}

\section{Introduction}
\label{sec:intro}

Photoacoustic tomography (PAT)
is an emerging
imaging modality, which combines the benefits
of pure  ultrasound imaging (high resolution)
with those of pure optical tomography (high contrast); see~\cite{Bea11,Wan09b}.
The basic principle of PAT is as follows~(see Figure~\ref{fig:pat}): A semitransparent sample such as a part of a human patient
is illuminated with short pulses of optical radiation. A fraction of the optical energy is absorbed inside the sample, which causes thermal heating, expansion, and a subsequent acoustic pressure wave  depending on the interior absorbing structure of the sample.
The acoustic pressure is measured outside of the  sample and used  to reconstruct an image of the interior.

One important reconstruction problem  in PAT is recovering  the initial pressure distribution  (see, for example,  \cite{AgrKucKun09,BurBauGruHalPal07,Hal14,haltmeier2017iterative,HalSchuSch05,huang2013full,nguyen2016dissipative,RosNtzRaz13}). The initial pressure distribution only provides qualitative information about the tissue-relevant parameters, as {it is} the product of the optical absorption coefficient
and the spatially varying optical intensity, which again indirectly depends on the tissue parameters.  Quantitative photoacoustic tomography (QPAT)  addresses this issue and aims at quantitatively estimating the tissue parameters by supplementing the  inversion of the acoustic wave equation
with an inverse problem for light propagation  (see, for example,
\cite{AmmBosJugKang11,BalJolJug10,BalRen11,CheYa12,CoxArrBea07a,CoxArrKoeBea06,cox2012quantitative,HalNeuRab15,haltmeier2017analysis,KruLiuFanApp95,MamRen14,NatSch14,RenHaoZha13,RosRazNtz10,TarCoxKaiArr12,YaoSunHua10}).

\begin{figure}[htb]
\begin{center}
  \includegraphics[width=1\columnwidth]{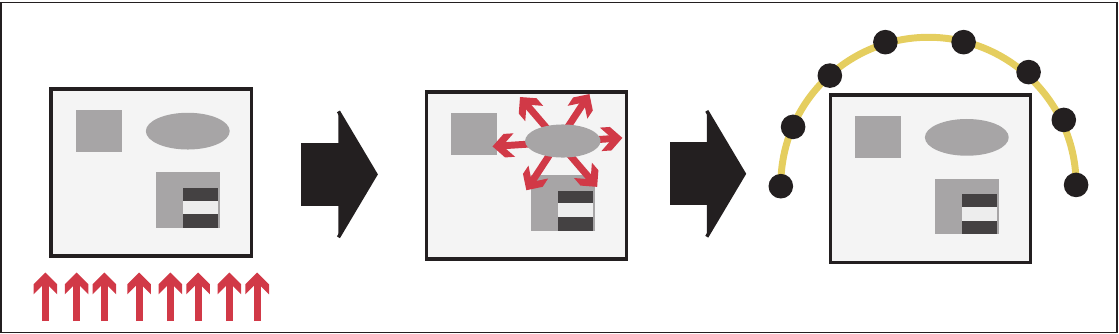}
\caption{\label{fig:pat}{{{Basic principles of PAT.}}
\textbf{Left}: the investigated object is illuminated with a short optical pulse;
\textbf{Middle}: due to the  thermoelastic effect, the absorbed light distribution  induces
an acoustic  pressure wave depending on internal tissue properties;
\textbf{Right}: the acoustic pressure wave is measured outside the  object and used to
reconstruct an image of the interior.}}
\end{center}
\end{figure}

\subsection{Multi-Source QPAT}

In this paper, we consider image reconstruction in QPAT using multiple sources. We allow limited view measurements, where, for each illumination, partial data are collected only from a certain angular domain. For modeling the light transport, we use the radiative transfer equation~(RTE), which~is~commonly considered as a very accurate model for light transport in tissue~(see, for~example,~\cite{Arr99,DaLiVol6,EggSch15,Kan10}). {In particular, opposed to the diffusion approximation, the RTE allows for modeling directed optical radiation, which is required for a reasonable QPAT forward model. Additionally, it allows for including internal voids as regions of low scattering.} As proposed in~\cite{HalNeuRab15}, we work with a single-stage
reconstruction procedure for QPAT, where the optical parameters are reconstructed directly from the measured acoustical data. The image reconstruction problem of multi-source
QPAT using $N$ different sources can be formulated as a system of nonlinear
equations {(see, for example,}~\cite{HalNeuRab15,GaoFenSon15})
\begin{equation} \label{eq:ip}
	\Fo_i (\mu) =   \data_i \quad \text{ for }  i  =1, \dots,  \nill \,.
\end{equation}
Here, $\Fo_i$ is the operator that maps the unknown parameter pair
 $\mu = (\mua, \mus)$ consisting of the absorption coefficient $\mua \colon \Omega  \to \R$ and the scattering coefficient {$\mus \colon \Omega  \to \R$} to the measured  acoustic data $\data_i$  corresponding to the  $i$-th source distribution (see Section~\ref{sec:fwd} for precise definitions).
There are two main {classes of methods for solving} the nonlinear inverse problem \req{ip},
namely, Tikhonov type regularization on the one
and iterative regularization methods on the other hand \cite{Engl96,Kal08,scherzer2009variational}.
Both approaches are based on rewriting \req{ip} as a single equation
$\Fo (\mu) = \data$ with forward operator $\Fo =  (\Fo_i)_{i=1}^\nill$ and
data $\data  = (\data_i)_{i=1}^\nill$.
In Tikhonov regularization, one defines  approximate solutions as minimizers
of  the penalized least squares functional  $\frac{1}{2}  \norm{\Fo  \skl{ \mu}- \data}^2 + \la \rfun (\mu)$.
Here, $\rfun (\, \cdot \,)$ is an appropriate regularization functional included to stabilize the inversion process  and $\la$  a regularization parameter
that has to be carefully  chosen  depending on the data and the noise. In iterative regularization methods, stabilization is achieved
via early stopping of  iterative  schemes.   In such a situation, one usually applies  iterative  optimization  techniques designed   for minimizing the un-regularized
least squares functional {$\frac{1}{2} \norm{\Fo  \skl{ \mu}- \data}^2$}, and the  iteration index
plays the role of the regularization parameter.

Tikhonov type as well as iterative regularization methods can
both be formulated as finding a solution of the optimization problem
\begin{equation} \label{eq:opt}
\left\{\begin{aligned}
&  \min  && \frac{1}{2}
\sum_{i=1}^\nill \norm{\Fo_i  \skl{ \mu}- \data_i}^2  + \Gr(\mu),  \\
&  \text{with }    &&\mu   \in L^2(\Om) \times L^2(\Om)  \,.
\end{aligned}
\right.
\end{equation}
{In iterative} regularization methods, one takes $G   =  \II_{\dom}$, the characteristic function
of the domain of definition $\dom$ of the forward operator (taking the value $0$ in and the value $\infty$ outside of $\dom$).  In Tikhonov regularization, we take $G   =   \II_{\dom}  + \la \rfun$.
Well established algorithms for solving {Equation} \req{opt} are proximal  gradient algorithms \cite{combettes2005signal,bauschke2011convex},
which can be written in the form
  \begin{equation} \label{eq:proxgrad}
	\mu_{k+1}  =  \prox_{s_k G} \kl{ \mu_k
	-  s_k \sum_{i=1}^\nill \Fo_i'(  \mu_k)^*\kl{ \Fo_i( \mu_k ) - \data_i}  } \,.
\end{equation}
Here, $\prox_{s_k G}$ is the proximity operator and
$s_k$ the {positive} step size; $ \Fo_i'( \mu_k )$  denotes the derivative of  the $i$-th forward operator evaluated at $ \mu_k$ with $\Fo_i'( \mu_k)^* $ being its Hilbert space adjoint.

\subsection{Stochastic Proximal Gradient Algorithms}

Each iteration in the proximal gradient algorithm \eqref{eq:proxgrad}
can be numerically quite expensive,  since it requires  solving the
forward and adjoint problems for all $\nill$ equations
in \req{ip}.  In many cases, stochastic (proximal)
gradient  methods turn out to be more efficient since these methods  only consider one of the equations  in
\req{ip} per iteration.
The stochastic proximal gradient method (see, for example,  \cite{bertsekas2010incremental,bertsekas2011incremental,xiao2014proximal,duchi2009efficient,li2017averaged,pereyra2016survey} and the references therein) for solving
\eqref{eq:opt} is  defined by
\begin{equation*} 
	\mu_{k+1}  =  \prox_{s_k G} \kl{ \mu_k
	-  s_k  \Fo_{i(k)}'(  \mu_k)^*\kl{ \Fo_i( \mu_{i(k)} ) - \data_i}  } \,,
\end{equation*}
where $i(k) \in \set{1, \dots, \nill}$ corresponds to one of the equations in \req{ip} that is  selected randomly for the
update in the $k$-th iteration. In opposition to the standard
proximal gradient method, this requires solving only one forward  and  one adjoint problem per iteration. Therefore, one iterative step
is much  cheaper for the stochastic gradient method than for the full  gradient method.
In the case of no regularization, $\la =0$, the stochastic proximal gradient method  reduces to
 the Kaczmarz method for inverse problems studied in~\cite{de2008steepest,haltmeier2007kaczmarz,haltmeier2007kaczmarz2}.

The computationally most expensive task in the above methods is the numerical solution
of the RTE. In this paper, we therefore additionally study a  reformulation of the inverse problem
of QPAT avoiding the computation of a solution of the RTE. For this purpose,
the inverse problem is reformulated as multilinear inverse problem \eqref{eq:mull},
where  the RTE is added as a constraint instead of explicitly including  its solution. The new formulation will be again addressed by Tikhonov regularization in combination with proximal stochastic gradient methods as discussed  in
Section~\ref{sec:ML}.

Note that, in QPAT, it has often  been assumed that  the  initial pressure distribution (corresponding to  each illumination) is already recovered  from acoustic measurements (see, for~example,~\mbox{\cite{alberti2017disjoint,ammari2017multi,BalRen11,CoxArrKoeBea06,SarTarCoxArr13,TarCoxKaiArr12,RenHaoZha13,wang2017iterative,YaoSunHua10})}. Research was focused on inverting the light propagation in tissues either modeled by  the RTE or the diffusion approximation.  In the case that acoustic measurements are {only known on parts of} the boundary, reconstruction  of the  initial pressure distribution is not possible in a stable manner. In order to obtain stable reconstruction results in \cite{HalNeuRab15}, we propose a single-stage approach for QPAT,  where the optical parameters are directly recovered from the acoustic boundary data.
Throughout this paper, we will make use of this approach, which delivers stable results especially in the limited view situation.
In opposition to  \cite{HalNeuRab15}, in this paper, we introduce  (proximal) stochastic gradient methods, which effectively exploit the multi-illumination
structure and  turn out to be faster than the  standard  proximal gradient methods.

\subsection{Outline}

The remainder of this paper is organized as follows.  In Section \ref{sec:fwd}, we provide the mathematical
model for  QPAT  (the forward problem) using the RTE. We allow multiple sources and  partial acoustic
measurements.  We also recall known results for QPAT including differentiability of the forward problem.
In Section  \ref{sec:SPG}, we address the inverse problem  of QPAT using Tikhonov regularization and study the
proximal stochastic gradient method for its solution. The new reformulation of the inverse problem of QPAT as a
multilinear  inverse problem  is presented in Section \ref{sec:ML}. For the solution of the proposed formulation, we again
develop proximal gradient  methods. Numerical results are  presented in Section \ref{sec:num}. The paper is concluded
with a summary and outlook presented in Section~\ref{sec:conclusion}.

\section{The Forward Problem in QPAT}
\label{sec:fwd}

The image reconstruction problem of QPAT can be written as the system \req{ip} of nonlinear equations, where the  forward operators $\Fo_i$ map tissue relevant parameters  to acoustic data sets  recorded in specific regions outside the tissue.
Precise formulations will be given in this section.

\subsection{Mathematical Notation}

We fix some mathematical notation that is used throughout this paper.
We denote by $\Omega \subseteq \R^d$ a convex domain with piecewise smooth boundary modeling
our  domain of interest, with $d \in \set{2,3}$ denoting  the spatial dimension. {In order to be able to impose appropriate boundary conditions for the~RTE, it is  convenient to
split the set} $ \Gamma \coloneqq \partial \Om \times \sph^{d-1}$  into  inflow and  outflow boundaries,
\begin{align*}
\Gamma_-& \coloneqq
\left\{(x,\theta)\in\partial\Om\times \sph^{d-1} \mid \nu(x)\cdot\theta \leq 0\right\} \,,
\\
\Gamma_+&\coloneqq
\left\{(x,\theta)\in\partial\Om\times
\sph^{d-1}
\mid \nu(x)\cdot\theta>0\right\} \,,
\end{align*}
with $\nu(x)$ denoting the outward pointing  unit normal at $x \in \partial \Om$ and $x \edot y$ the standard inner product in $\R^d$.
We write $B_R = \set{x \in \R^d \mid \snorm{x} < R }$ for the ball of radius $R$ centered at the origin and suppose   $B_R \supseteq  \Omega$.

By $L^2\skl{\Omega}$ and $L^2\skl{\Omega\times \sph^{d-1}}$, we denote the Hilbert spaces of square integrable functions  on $\Omega$ and $\Omega\times \sph^{d-1}$,
respectively.
By $ L^2\kl{\Gamma_-, \abs{\nu \cdot \theta}}$, we denote the space of all  $\qb \colon  \Gamma_-\to \R$
for which  $\norm{\qb}^2_{ L^2\kl{\Gamma_-, \abs{\nu \cdot \theta}}} \coloneqq \int_{\Gamma_-}
\abs{\qb (x,\theta)}^2 \abs{\nu \cdot \theta} \rmd (x,\theta)$  is finite.
We further write
\begin{align*}
 \snorm{\Phi}_{\WW}^2
&\coloneqq
\snorm{\Phi}_{L^2\skl{\Om \times \sph^{d-1}}}^2
+\snorm{\theta \cdot \nabla_x \Phi}_{L^2\skl{\Om \times \sph^{d-1}}}^2 +
\snorm{\Phi|_{\Gamma_-}}_{L^2\kl{\Gamma_-,
\abs{\nu \cdot \theta}}}^2 \,, \\
\snorm{v}_{\YY}^2 &\coloneqq
 \int_0^\infty\int_{\partial B_R} \abs{v(x,t)}^2 t \,  \rmd x \rmd t \,,
\end{align*}
 and define
 \begin{align*}
 \QQ  &\coloneqq L^2\skl{\Omega\times \sph^{d-1}} \times L^2\kl{\Gamma_-, \abs{\nu \cdot \theta}}, \\
 \XX &\coloneqq L^2\skl{\Omega} \times L^2\skl{\Omega},  \\
 \WW &\coloneqq \sset{ \Phi \colon \Omega\times \sph^{d-1} \to \R  \mid
 \norm{\Phi}_{\WW} < \infty },
 \\
 \YY &\coloneqq \sset{ v \colon  \partial B_R  \times (0, \infty) \to \R  \mid
 \norm{v}_{\YY} < \infty } \,.
\end{align*}
 {The inner} products in $\QQ$, $\XX$, $\WW$ , $\YY$  will be denoted by $\inner{\edot}{\edot}_\QQ$,
 $\inner{\edot}{\edot}_\XX$, $\inner{\edot}{\edot}_\WW$, $\inner{\edot}{\edot}_\YY$, respectively.
 The subspace of all $\Phi \in \WW$  with $\Phi|_{\Gamma_-}=0$ will be denoted by $\WW_0$.

Elements in $\XX$ will be written in the  form $\mu = (\mua, \mus)$ and are the parameters
 we aim to determine. They are actually required to be contained in the  convex subset
 \begin{equation} \label{eq:dom}
 \dom(\To) \coloneqq  \set{ \mu  \in  \XX  \mid
 0 \leq  \mua \leq  \overline{\mu}_a  \,, 0 \leq \mus \leq  \overline{\mu}_s } \,,
 \end{equation}
where   $\overline{\mu}_a, \overline{\mu}_s >0$. Elements in $\QQ$ will be written in the form $q = (\qb, \qi)$ and model the optical sources. Elements in $\WW$ describe the optical radiation,
and elements in  $\YY$ the  measured   acoustic data.

\subsection{The Radiative Transfer Equation}

To specify the  forward operators, we require mathematical models for the light propagation, the~conversion of optical
into acoustic energy, and the propagation of the acoustic waves.
These models will be presented in the rest of this section.

We model the optical  radiation by a function $\Phi \colon  \Omega \times \sph^{d-1} \to \R$, where  $\Phi(x,\theta)$ is the density
of photons  at position $x \in \Om$ and propagating  into direction $\theta \in \sph^{d-1}$.  The interaction of the photons with the background are described by  absorption coefficient  $\mua \colon \Om \to \R$, the scattering coefficient $\mus \colon \Om \to \R$, and the scattering  operator $\Ko \colon \Phi  \mapsto \Ko \Phi  $, taking the form
({see} \cite{Arr99,Kan10})
\begin{equation}\label{eq:so}
\forall \kl{x, \theta} \in \Om \times \sph^{d-1} \colon \quad
\Ko \Phi(x, \theta) =
\int_{\sph^{d-1}}  k(\theta, \theta' ) \Phi(x, \theta') \rmd \theta' \,,
\end{equation}
with  scattering  kernel  $k \colon \sph^{d-1} \times \sph^{d-1} \to \R$.
The absorption coefficient describes the ability of the background to absorb photons and the scattering coefficient describes the amount of  photon scattering.
The scattering kernel $k\kl{\theta, \theta'}$ describes the redistribution of velocity directions due to interaction of the photons  with the background.
From physical considerations, it is natural to assume $k$ to be measurable, symmetric, nonnegative, and to satisfy
$\int_{\sph^{d-1}}  k \kl{\edot, \theta'} \rmd \theta' = 1$.
In this article, we are concerned with the situation when the kernel is known a priori.
Additionally, we assume $k$ to be essentially bounded.  

The photon density $\Phi(x, \theta)$ is supposed to satisfy the stationary
radiative transfer equation (RTE),
\begin{equation}\label{eq:rte}
 \kl{\theta \cdot \nabla_x
+ \mua  + \mus (\Io-\Ko) }
\Phi(x,\theta)=
\qi (x,\theta)
 \quad \text{ for  }  (x, \theta) \in \Om \times \sph^{d-1}
\end{equation}
with boundary conditions
\begin{equation}\label{eq:rte-b}
\Phi|_{\Gamma_-} (x,\theta)
=
\qb(x,\theta) \quad \text{ for  }  (x, \theta) \in \Gamma_- \,.
\end{equation}
Here, $\qi \colon \Om \times \sph^{d-1} \to \R$ denotes an internal  photon source  and $\qb\colon \Gamma_- \to \R$ a prescribed boundary source pattern.
{Note that PAT uses  very short light pulses (below microseconds) and
that light propagation happens on  time scales much shorter than the scale of acoustic  wave propagation. This~justifies the use of the stationary case for the RTE; see} \cite{BalJolJug10}
{for a more complete discussion.}

\begin{theorem}[Well-posedness of the RTE]  \label{thm:rte}
For every $\mu  \in  \dom(\To) $  and $q \in  \QQ$, the stationary RTE \req{rte} admits a unique solution $\Phi \in \WW$.
Moreover, there exists a constant $C$ only depending on the parameters  $\overline{\mu}_a, \overline{\mu}_s >0$ (defining the domain $\dom(\To) $), such that
\begin{equation} \label{eq:apriori}
 \norm{\Phi}_{\WW}
 \leq
 C\,  \kl{    \norm{\qi}_{L^2} + \norm{\qb}_{L^2} }\,.
\end{equation}
\end{theorem}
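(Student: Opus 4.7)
The plan is to homogenize the inflow boundary by a lift and then to solve the resulting reduced equation on $\WW_0$: an energy identity combined with a transport Poincar\'e inequality will yield uniqueness together with the a priori bound, while existence will follow from a standard transport-theoretic argument. The anticipated obstacle is that $\mua$ is only required to be nonnegative, so coercivity of the transport-scattering operator cannot be obtained from absorption alone; it will be extracted from combining the positivity of $\Io - \Ko$ with the Poincar\'e inequality valid on $\WW_0$.

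First I would lift the boundary datum, defining $\Psi \in \WW$ as the extension of $\qb$ along characteristics, that is, the unique solution of $\theta \cdot \nabla_x \Psi = 0$ in $\Om\times\sph^{d-1}$ with $\Psi|_{\Gamma_-} = \qb$. Integration along characteristics gives $\snorm{\Psi}_\WW \leq C_1\snorm{\qb}_{L^2(\Gamma_-,|\nu\cdot\theta|)}$ with $C_1$ depending only on $\diam(\Om)$. Writing $\Phi = \Phi_0 + \Psi$ with $\Phi_0 \in \WW_0$ and setting $g \coloneqq \qi - (\mua + \mus(\Io-\Ko))\Psi$, the problem \eqref{eq:rte}--\eqref{eq:rte-b} reduces to finding $\Phi_0 \in \WW_0$ with
\[
(\theta\cdot\nabla_x + \mua + \mus(\Io-\Ko))\Phi_0 = g \quad \text{in } \Om\times\sph^{d-1},
\]
where $\snorm{g}_{L^2(\Om\times\sph^{d-1})} \leq \snorm{\qi}_{L^2} + C_2\snorm{\qb}_{L^2(\Gamma_-,|\nu\cdot\theta|)}$.

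I would then pair this equation with $\Phi_0$ and apply Green's identity together with $\Phi_0|_{\Gamma_-} = 0$ to obtain the energy identity
\[
\tfrac{1}{2}\snorm{\Phi_0|_{\Gamma_+}}_{L^2(\Gamma_+,|\nu\cdot\theta|)}^2 + \int_{\Om\times\sph^{d-1}} \mua\Phi_0^2 + \int_\Om \mus \sinner{(\Io-\Ko)\Phi_0(x,\edot)}{\Phi_0(x,\edot)}_{L^2(\sph^{d-1})}\rmd x = \sinner{g}{\Phi_0}_{L^2}.
\]
Because $k$ is symmetric, nonnegative, and normalised by $\int k(\edot,\theta')\rmd\theta' = 1$, a Cauchy--Schwarz argument in the angular variable yields $\snorm{\Ko}_{L^2(\sph^{d-1})\to L^2(\sph^{d-1})} \leq 1$, hence $\Io-\Ko\geq 0$ and the three terms on the left are all nonnegative. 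For $\Phi_0 \in \WW_0$ the transport Poincar\'e inequality
\[
\snorm{\Phi_0}_{L^2(\Om\times\sph^{d-1})} \leq \diam(\Om)\snorm{\theta\cdot\nabla_x \Phi_0}_{L^2(\Om\times\sph^{d-1})}
\]
holds, proved by integrating $\Phi_0$ backward along characteristics to the vanishing inflow trace and applying Cauchy--Schwarz. Using the equation to bound $\snorm{\theta\cdot\nabla_x \Phi_0}_{L^2}$ by $\snorm{g}_{L^2} + C\snorm{\Phi_0}_{L^2}$ and then combining these ingredients yields $\snorm{\Phi_0}_\WW \leq C_3\snorm{g}_{L^2}$ with $C_3 = C_3(\overline\mu_a,\overline\mu_s,\diam(\Om))$, which in particular delivers uniqueness.

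Existence then follows from standard linear transport theory: with $\Ao \coloneqq \theta\cdot\nabla_x + \mua + \mus$ on $\WW_0$, which is explicitly invertible by characteristic integration, the equation is equivalent to the fixed-point problem $\Phi_0 = \Ao^{-1}(\mus\Ko\Phi_0 + g)$, whose solvability in $\WW_0$ follows from a Banach--Ne\v{c}as--Babu\v{s}ka / Neumann-series argument once injectivity is known. Reassembling $\Phi = \Phi_0 + \Psi$ gives the unique $\WW$-solution of \eqref{eq:rte}--\eqref{eq:rte-b}, and combining the bound on $\snorm{\Phi_0}_\WW$ with the lifting bound on $\snorm{\Psi}_\WW$ produces the estimate \eqref{eq:apriori}. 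As anticipated, the crux is the Poincar\'e step on $\WW_0$: without the lift, the nonnegative terms in the energy identity would not control the full $\WW$-norm, and the argument would collapse in the regime where $\mua$ vanishes.
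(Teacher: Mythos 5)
Your lifting of the boundary datum, the energy identity, and the transport Poincar\'e inequality on $\WW_0$ are all individually correct, but the step where you close the a priori estimate has a genuine gap. Combining the Poincar\'e inequality with the equation gives
$\snorm{\Phi_0}_{L^2}\leq \diam(\Om)\,\snorm{\theta\cdot\nabla_x\Phi_0}_{L^2}\leq \diam(\Om)\bigl(\snorm{g}_{L^2}+(\overline{\mu}_a+2\overline{\mu}_s)\snorm{\Phi_0}_{L^2}\bigr)$,
which can only be absorbed into the left-hand side when $\diam(\Om)\,(\overline{\mu}_a+2\overline{\mu}_s)<1$ --- a smallness condition that is neither assumed in the theorem nor satisfied in the paper's own experiments (where $\mus=\SI{3}{\per\cm}$ on a square of side $\SI{2}{\cm}$). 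The energy identity cannot supply the missing coercivity: because $\int_{\sph^{d-1}}k(\edot,\theta')\,\rmd\theta'=1$, the operator $\Io-\Ko$ annihilates functions that are constant in $\theta$, and $\mua$ is allowed to vanish on $\dom(\To)$, so for nearly isotropic $\Phi_0$ every interior term on the left of your identity degenerates and only the outflow trace survives, which does not control $\snorm{\Phi_0}_{L^2(\Om\times\sph^{d-1})}$. The same obstruction undermines the existence step: the natural norm bound on $(\theta\cdot\nabla_x+\mua+\mus)^{-1}\mus\Ko$ is $\sup\,\mus/(\mua+\mus)$, which equals $1$ wherever $\mua=0$, so the Neumann series need not converge, and a Banach--Ne\v{c}as--Babu\v{s}ka argument presupposes exactly the inf-sup estimate whose proof is the issue.

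This is precisely the difficulty the paper sidesteps: its proof of Theorem~\ref{thm:rte} consists of a citation to \cite{EggSch14b}, whose entire purpose is to establish well-posedness of the stationary RTE \emph{without} a positive lower bound on the absorption, by a substantially more delicate functional-analytic argument than a contraction or Poincar\'e estimate. Your diagnosis of where the difficulty lies is exactly right; the proposed resolution, however, only proves the theorem in the weakly scattering or small-domain regime, and you should either impose that restriction explicitly or defer to the cited result for the general case.
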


\begin{proof}
See \cite{EggSch14b}.
\end{proof}
\begin{definition}[Solution operator for the RTE]\label{def:rte}
The solution operator for the RTE is defined by
\begin{equation*}
\To\colon \QQ \times \dom(\To)
\to \WW   \colon
\kl{q,\mu} \mapsto \To (q,\mu) := \Phi \,,
\end{equation*}
where $\Phi$ denotes the unique solution of \req{rte}.
\end{definition}
Theorem \ref{thm:rte} guarantees that the operator $\To$, mapping $(q,\mu)\in \QQ \times  \dom(\To) $ to the solution of the~RTE, is well defined. Note that in the actual application
$q = (\qi, \qb) \in \QQ $ are prescribed sources, and  $\mu = (\mua, \mus) \in  \dom(\To)$ are the unknown parameters  to  be recovered.

\subsection{Heating Operator}

Due to the spatially varying absorption of photons, the
tissue is locally heated and emits an acoustic pressure wave.
The acoustic source is proportional to the amount of absorbed  photons, the~light intensity and the so-called Gr\"uneisen
parameter $\gamma$  describing the efficiency of  conversion of optical to acoustical energy. {We assume $\gamma$ to be constant and after appropriate re-scaling we take $\gamma =1$; for more details about the Gr\"uneisen parameter, we refer to} \cite{cox2012quantitative}.
Therefore, the conversion of the  optical energy into acoustic pressure wave  is described by the  heating operator defined  as follows.

\begin{definition}[Heating operator]\label{def:heating}
The heating operator is defined by
\begin{align}\label{eq:heating}
\Ho \colon  \QQ \times \dom(\To) &\to L^2(\Om)
\\  \nonumber
(q,\mu )  &\mapsto
\mua \int_{\sph^{d-1}}
\To  (q,\mu)  (\edot,\theta)\rmd  \theta \,.
\end{align}
\end{definition}

If one introduces the averaging operator $\Ao  \colon \WW \mapsto L^2(\Omega)$ defined by  $\Ao \Phi = \int_{\sph^{d-1}}
\Phi(\edot ,\theta)\rmd  \theta$ one may write
the heating operator in the form
$$\Ho(q,\mu) =  \mua \, \Ao \circ \To (q,\mu)
\quad \text{ for } (q,\mu) \in \QQ \times \dom(\To) \,. $$
Because $\To(q,\mu)$ models  the   photon density,  $\Ao \circ \To (q,\mu)$  actually models the total light intensity. \mbox{The heating} operator  is therefore given by the product   of the  absorption coefficient  and the light intensity.
The averaging operator $\Ao$ is well defined and bounded and therefore the heating operator is well defined as a mapping between $\QQ \times \dom(\To)$ and $L^2(\Omega)$.

\subsection{The Wave Equation}

The local heating causes an acoustic pressure wave, where the initial pressure distribution $p_0$ is
proportional to a fraction of the absorbed energy.   Assuming  constant speed of sound and after~rescaling, the   induced acoustic pressure $p \colon \R^d \times \kl{0, \infty} \to \R$
satisfies  the free-space wave equation:
\begin{equation}  \label{eq:wave}
	\left\{ \begin{array}{ll}
	 (\partial_t^2   - \Delta )p(x,t)
	=
	0
	 & \text{for }\kl{x,t} \in \R^d \times \kl{0, \infty},
	\\
	p\kl{x,0}
	=
	\text{{$p_0(x)$}}
	& \text{for }	x \in \R^d,
	\\
	\partial_t
	p\kl{x,0}
	=0
	&  \text{for } x \in \R^d \,.
\end{array} \right.
\end{equation}
Here, the function $p_0$  vanishes outside $B_R$, the ball of radius $R$, and  acoustic data are collected
on a subset of $\partial B_R \times (0, \infty)$ that we denote by $\Lambda \times (0, \infty)$. {Recall that  coupling of the RTE and the wave equation happens in such a way that the result of the  heating operator $\Ho(q,\mu)$ acts as initial  sound source $p_0$ depending on tissue parameters; see Definition~{\ref{def:fwd}}.}
Standard  existence and uniqueness theory for hyperbolic equations  guarantees that, for any $p_0 \in H^1$,  \req{wave} has a unique solution $p \in H^1$, which
continuously depends on $p_0$. Taking the trace results in loss of regularity by degree $1/2$. Therefore, $p_0 \mapsto  p|_{\partial B_R \times (0, \infty)}$ is continuous between $H^1$ and $H^{1/2}$. The following Lemma implies the much  stronger result that $p_0 \mapsto  p|_{\partial B_R \times (0, \infty)}$ is actually an $L^2$-isometry.

\begin{lemma} \label{thm:trace}
Let $p_0 \in C^\infty \kl{\R^d}$ have support in $B_R$ and  let $p$ denote the solution of \req{wave}.
Then,
\begin{equation}\label{eq:trace}
	\int_{B_R}\int_{0}^\infty
	\abs{p(x,t)}^2  t \rmd t  \, \rmd x  =
	\frac{R}{2} \int_{B_R} \abs{p_0(x)}^2 \rmd x \,.
\end{equation}
\end{lemma}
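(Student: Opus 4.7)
The plan is to reduce the trace identity to a one-dimensional calculation via spherical harmonics and d'Alembert's formula. A more direct route would substitute Kirchhoff's representation $p(x,t)=\partial_t(tM_tp_0(x))$ (in $d=3$) into the left-hand side and execute the change of variables $(x,\theta,t)\mapsto x+t\theta$ mapping $\partial B_R\times\sph^{d-1}\times(0,2R)$ onto $B_R$, with the Jacobian $|\nu(x)\cdot\theta|\,t^{d-1}$ producing the factor $R/2$; however, the constant is cleaner to track via Parseval.

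Fix $d=3$; the case $d=2$ follows from $d=3$ by the method of descent (treat $p_0$ as $x_3$-independent). Expand $p_0(r\omega)=\sum_{\ell,m}c_{\ell,m}(r)Y_{\ell,m}(\omega)$ in spherical harmonics, and observe that rotational invariance of the Cauchy problem forces $p(r\omega,t)=\sum_{\ell,m}P_{\ell,m}(r,t)Y_{\ell,m}(\omega)$, where each $P_{\ell,m}$ solves the radial wave equation with initial data $(c_{\ell,m},0)$. Using the $\sph^{2}$-orthonormality of the $Y_{\ell,m}$ and $dS=R^{2}d\omega$ on $\partial B_R$, Parseval reduces the claim to
\[
R^{2}\int_0^\infty \lvert P_{\ell,m}(R,t)\rvert^{2}\,t\,dt \;=\; \frac{R}{2}\int_0^R \lvert c_{\ell,m}(r)\rvert^{2}\,r^{2}\,dr
\]
for every $(\ell,m)$, since $p_0$ is supported in $B_R$ and the weights $r^{2}$ come from the volume element.

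For $\ell=0$ the substitution $U(r,t)=rP_{0,0}(r,t)$ converts the 3-D radial wave equation to the 1-D wave equation $U_{tt}=U_{rr}$ on $(0,\infty)$ with Dirichlet condition $U(0,t)=0$. Extending $U$ oddly to $r\in\R$ gives a free 1-D Cauchy problem with initial datum $U_0(r)=rc_{0,0}(\lvert r\rvert)\sign(r)$, odd and supported in $[-R,R]$; d'Alembert's formula yields $P_{0,0}(R,t)=(2R)^{-1}U_0(R-t)$, and the substitution $s=R-t$ together with $\int_{-R}^R s\lvert U_0(s)\rvert^{2}\,ds=0$ (by oddness of $U_0$) produces exactly the factor $R/2$. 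For $\ell\ge 1$ the radial equation carries a Bessel potential $\ell(\ell+1)/r^{2}$, and the same reduction is effected by the Hankel transform in $r$ with spherical-Bessel kernel $j_\ell$, turning the identity into a Hankel-Plancherel statement.

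The main obstacle is making the higher-$\ell$ computation match the universal constant $R/2$ uniformly in $\ell$; the $\ell=0$ case provides the target constant and the spherical-Bessel asymptotics then pin down the normalisation for $\ell\ge 1$. A pragmatic alternative is to prove the identity first on the dense class of spherically symmetric data (where the $\ell=0$ argument is self-contained) and then extend by the orthogonal decomposition above, relegating the Bessel bookkeeping to a single computation that can be cited from standard references on spherical-mean transforms.
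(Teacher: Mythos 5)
Your reduction to modewise identities via spherical harmonics is legitimate (the wave group commutes with rotations and both sides decompose orthogonally), and the $\ell=0$ computation is essentially correct --- though note that the odd extension of $U_0=rP_{0,0}(r,0)$ is $U_0(s)=s\,c_{0,0}(\abs{s})$, not $s\,c_{0,0}(\abs{s})\sign(s)$, which as written is an even function and would contradict the oddness you then invoke. The proposal nevertheless has two genuine gaps. First, the case $\ell\ge 1$, which is where all the work lies, is not carried out: because the time integral carries the weight $t\,\rmd t$, after a Fourier--Bessel expansion $P_{\ell}(R,t)=\int_0^\infty A(\lambda)\cos(\lambda t)\,\rmd\lambda$ the quantity $\int_0^\infty \abs{P_{\ell}(R,t)}^2\,t\,\rmd t$ is \emph{not} a Plancherel expression; the kernel $\int_0^\infty t\cos(\lambda t)\cos(\mu t)\,\rmd t$ is a non-diagonal distribution in $(\lambda,\mu)$, so ``Hankel--Plancherel'' does not close the argument. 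The suggested fallback of first proving the identity ``on the dense class of spherically symmetric data'' also fails: functions radial about the origin span only the $\ell=0$ sector and are not dense in $L^2(B_R)$, and since the identity is quadratic one cannot extend it by linearity without the polarized identity for data radial about different centers --- which is precisely what is missing. Second, the claim that $d=2$ follows from $d=3$ by descent is wrong: an $x_3$-independent extension of $p_0$ is not compactly supported in $\R^3$, so the three-dimensional identity does not apply to it, and the descended trace lives on the cylinder $\partial B_R\times\R$, not on the sphere in $\R^3$, so the two identities are not related by descent. This is not a technicality; the even-dimensional identity required a separate and substantially different proof in the literature precisely because Huygens' principle fails there.

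For comparison, the paper does not prove the lemma at all but cites Finch--Patch--Rakesh (odd $d$) and Finch--Haltmeier--Rakesh (even $d$). Those proofs avoid separation of variables entirely: they use multiplier/energy identities for the wave equation integrated over $B_R\times(0,\infty)$, with strong Huygens' principle controlling the large-time contribution in odd dimensions and a more delicate argument in even dimensions. If you want a self-contained proof you should either reproduce that energy argument or genuinely complete the $\ell\ge 1$ and $d=2$ computations; as it stands your proposal establishes only the radial case in $d=3$. (Also, the left-hand side of \req{trace} should read $\int_{\partial B_R}$ rather than $\int_{B_R}$, consistent with the definition of $\snorm{\cdot}_{\YY}$ and of $\wave$; your computation correctly uses the sphere.)
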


\begin{proof}
See \cite{FinPatRak04} for  $d$ odd and~\cite{FinHalRak07} for $d$ even.
\end{proof}

\begin{definition}\label{def:wave}
We define the  solution operator with full boundary data for the wave Equation \req{wave} by
\begin{equation} \label{eq:solW}
	\wave \colon C^\infty \skl{B_R} \subseteq L^2\skl{B_R}
	\to \YY \colon
	p_0   \mapsto   p|_{\partial B_R \times (0, \infty)}  \,,
\end{equation}
where $p$ denotes the solution of \eqref{eq:wave}.
\end{definition}
According to Lemma \ref{thm:trace}, the  operator
$\wave$ can be uniquely extended to a bounded linear operator defined on $L^2\skl{B}$, denoted again by $ \wave  \colon  L^2\skl{B_R} \to  \YY$. The
partial acoustic measurements made on $\Lambda \subseteq \partial B_R$
are then modeled by $  \chi_{\Lambda \times (0, \infty)} \wave p_0$.

\subsection{Analysis of the  Forward Problem in Multi-Source QPAT}

We assume that we perform $N$ individual experiments, where each experiment consists of separate optical sources and separate acoustic measurements.
For the $i$-th experiment, we denote the source term by
$q_i \in \QQ$ and assume the  acoustic measurements are
made on $\Lambda_i \times (0, T_i)  \subseteq  \partial B_R \times (0, \infty)$.

\begin{definition}\label{def:fwd}
For any $i \in \sset{1, \dots, \nill}$, we denote
 \begin{align*}
 \To_i  \colon & \dom(\To)  \to \WW \colon  \mu \mapsto \To(q_i,\mu),
 \\
\Ho_i  \colon & \dom(\To)  \to L^2(\Om ) \colon  \mu \mapsto \mu_a (\Ao \circ \To_i) (\mu),
\\
\wave_i  \colon & L^2(\Om) \to \YY \colon  p_0
\mapsto \chi_{\Lambda_i \times (0, T_i)} \wave p_0,
\\
\Fo_i \colon & \dom(\To)  \to \YY \colon \mu \mapsto (\wave_i \circ \Ho_i)(\mu) \,.
\end{align*}
Here, $\To_i$ denotes the $i$-th solution operator  for the RTE,
$\Ho_i$ the  $i$-th heating operator, $\wave_i$ the $i$-th partial solution operator for the wave equation,
and  $\Fo_i$ the $i$-th forward operator.
\end{definition}

Recall that $\To$ stands for the solution operator  for the  RTE \req{rte} given in Definition \ref{def:rte}, \mbox{$\Ao \Phi
= \int_{\sph^{d-1}} \Phi(\edot, \theta) \rmd \theta$} is the averaging operator, and $\wave$
the solution operator for the wave Equation~\req{wave};
see Definition \ref{def:wave}.
The operator $\To_i$ models the photon transport and  its solution (via the  heating operator) acts  as input for the solution of the wave equation and thereby
couples the optical with the acoustical~part.

Next, we recall  continuity  and differentiability  of
the forward operators. For that purpose, we call $h \in  \XX$ a  feasible direction at $\mu \in \dom(\To)$, if there exists some $\eps >0$ with  $\mu  +  \eps h \in  \dom(\To)$.

\begin{theorem}[Continuity\label{thm:F-cont-diff} and Differentiability]\mbox{}

\begin{enumerate}
 \item The operators $\To_i$,  $\Fo_i$ and $\Ho_i$ are sequentially continuous and  Lipschitz-continuous.
 \item For every  $\mu \in \dom(\To) $, the one-sided directional  derivatives  $\To_i' (\mu)(h) $,  $\Fo_i' (\mu)(h) $ of $\To_i$, $\Fo_i$ at $\mu $ in any feasible direction $h$ exist, and are  given by
\begin{align} \label{eq:derT}
\To_i'  (\mu)(h)
& =
 \To \kl{0, -(h_a + h_s - h_s\Ko) \To(\mu),\mu },
  \\ \label{eq:derF}
\Fo_i' (\mu)(h)
&=
\wave_{\Lambda_i,T_i}  \kl{  h_a \Ao\To_i  (\mu)   +
  \mua \Ao (\To_i'  (\mu)(h)) }  \,.
\end{align}
\end{enumerate}
\end{theorem}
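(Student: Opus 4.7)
The plan is to derive every assertion from the a priori RTE estimate \eqref{eq:apriori}, together with the facts that the averaging operator $\Ao\colon\WW\to L^2(\Om)$ and the partial acoustic observation operators $\wave_i\colon L^2(\Om)\to\YY$ (well-defined and bounded by Lemma~\ref{thm:trace}) are linear and continuous. First I would establish the statements for the RTE solution operator $\To_i$, and then read off the corresponding statements for $\Ho_i=\mua\,\Ao\,\To_i$ and $\Fo_i=\wave_i\circ\Ho_i$ by the product and chain rules.

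For the Lipschitz continuity of $\To_i$, fix $\mu^{(1)},\mu^{(2)}\in\dom(\To)$, set $h=\mu^{(1)}-\mu^{(2)}$, and write $\Phi^{(j)}=\To_i(\mu^{(j)})$. Subtracting the two instances of the RTE \eqref{eq:rte}--\eqref{eq:rte-b} verifies that $\Phi^{(1)}-\Phi^{(2)}$ solves the RTE with coefficients $\mu^{(1)}$, vanishing inflow data, and internal source $-\bkl{h_a+h_s(\Io-\Ko)}\Phi^{(2)}$. The a priori estimate yields
\[ \snorm{\Phi^{(1)}-\Phi^{(2)}}_\WW \leq C\,\snorm{(h_a+h_s(\Io-\Ko))\Phi^{(2)}}_{L^2(\Om\times\sph^{d-1})}, \]
and on the bounded convex set $\dom(\To)$ the right-hand side is controlled by $C'\snorm{h}_\XX$ by invoking the uniform essential bounds \eqref{eq:dom} on the coefficients together with the a priori bound on $\Phi^{(2)}$ from \eqref{eq:apriori}. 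Since Lipschitz continuity implies sequential continuity, the first assertion for $\To_i$ follows. The statements for $\Ho_i$ and $\Fo_i$ reduce to this one: composing with the bounded linear operators $\Ao$ and $\wave_i$ preserves Lipschitz constants, and multiplication by $\mua$ in the definition of $\Ho_i$ is Lipschitz on the bounded set $\dom(\To)$ by an analogous product estimate.

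For the directional derivative of $\To_i$, fix $\mu\in\dom(\To)$ and a feasible direction $h$. Let $\Psi\in\WW$ denote the candidate on the right-hand side of \eqref{eq:derT}, namely the solution, guaranteed by Theorem~\ref{thm:rte}, of the RTE with coefficients $\mu$, zero inflow data, and internal source $-(h_a+h_s(\Io-\Ko))\To_i(\mu)$. Setting $r_\eps\coloneqq\To_i(\mu+\eps h)-\To_i(\mu)-\eps\Psi$, a short algebraic manipulation of the three defining RTEs shows that $r_\eps$ satisfies the RTE with coefficients $\mu+\eps h$, zero inflow data, and internal source $-\eps^2(h_a+h_s(\Io-\Ko))\Psi$. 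A single application of \eqref{eq:apriori} then gives $\snorm{r_\eps}_\WW = O(\eps^2)$, proving \eqref{eq:derT}. Differentiability of $\Ho_i$ follows from the product rule, with directional derivative $h_a\Ao\To_i(\mu)+\mua\Ao(\To_i'(\mu)(h))$; applying the bounded linear operator $\wave_i$ and the chain rule yields the formula \eqref{eq:derF} for $\Fo_i$.

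The main obstacle is the Lipschitz step, specifically controlling $\snorm{h_a\Phi^{(2)}}_{L^2(\Om\times\sph^{d-1})}$ in terms of $\snorm{h_a}_{L^2(\Om)}$, since the pointwise product of two $L^2$-functions need not lie in $L^2$. The way around is either to exploit the uniform $L^\infty$ bounds coming from \eqref{eq:dom} (giving Lipschitz constants that depend on $\overline{\mu}_a,\overline{\mu}_s$), or to invoke the additional regularity of RTE solutions supplied in \cite{EggSch14b}; once this estimate is in place, the remaining continuity and differentiability steps reduce to routine a priori estimates.
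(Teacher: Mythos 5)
The paper itself offers no proof of Theorem~\ref{thm:F-cont-diff}; it is deferred entirely to \cite{HalNeuRab15}. Your argument reconstructs what is, as far as one can tell from the surrounding text, exactly the intended proof: the perturbation identity $\Mo(\mu+\eps h)=\Mo(\mu)+\eps\bigl(h_a+h_s(\Io-\Ko)\bigr)$, the observation that the difference $\Phi^{(1)}-\Phi^{(2)}$ (respectively the remainder $r_\eps$) again solves an RTE with zero inflow data and an explicitly computable internal source, and a single application of the a priori bound \eqref{eq:apriori}. Your computation that $r_\eps$ has source $-\eps^2\bigl(h_a+h_s(\Io-\Ko)\bigr)\Psi$ is correct, and for a \emph{fixed} feasible direction $h$ the multiplier $h_a+h_s(\Io-\Ko)$ is automatically a bounded operator on $L^2(\Om\times\sph^{d-1})$ (since $\eps_0 h$ is a difference of elements of $\dom(\To)$ and hence $h_a,h_s$ are essentially bounded), so the $O(\eps^2)$ estimate for the directional derivative is clean. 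The reduction of $\Ho_i$ and $\Fo_i$ to $\To_i$ via the product rule and the boundedness of $\Ao$ and $\wave_i$ is also the route the paper takes implicitly.

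One caveat on the obstacle you correctly isolate. Of the two remedies you offer for the product estimate $\snorm{h_a\Phi^{(2)}}_{L^2(\Om\times\sph^{d-1})}\lesssim\snorm{h_a}_{L^2(\Om)}$, only the second one works. The uniform bounds $0\le\mua\le\overline{\mu}_a$, $0\le\mus\le\overline{\mu}_s$ from \eqref{eq:dom} give $\sabs{h_a}\le\overline{\mu}_a$ pointwise and hence $\snorm{h_a\Phi^{(2)}}_{L^2}\le\overline{\mu}_a\snorm{\Phi^{(2)}}_{L^2}$, which bounds the difference $\To_i(\mu^{(1)})-\To_i(\mu^{(2)})$ by a constant but does \emph{not} control it by $\snorm{h}_{\XX}$; so that route yields boundedness, not Lipschitz continuity in the $L^2\times L^2$ topology. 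What is actually needed is a uniform $L^\infty$ (or at least $L^\infty_x L^2_\theta$) a priori bound on the RTE solution $\Phi^{(2)}$ itself, i.e.\ your second option. This is indeed the ingredient the paper relies on: in the proof of the subsequent theorem on Lipschitz continuity of $\nabla\Fr_I$ it explicitly invokes ``an analog of Theorem~\ref{thm:rte} for the supremums norm.'' With that estimate in hand (which in turn presumes essentially bounded sources $q_i$, as in the application), your proof is complete and matches the approach of the cited reference.
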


\begin{proof}
See \cite{HalNeuRab15}.
\end{proof}

Equations \req{derT} and \req{derF}
define a bounded linear
operator $\Fo_i' (\mu) \colon \XX \to \YY$,
which we call the  derivative of $\Fo_i$ at $\mu \in \dom(\To)$.
Numerical minimization schemes  actually require  the
adjoint of  $\Fo_i' (\mu)$, which we compute next.

\begin{theorem}[Adjoint of $\Fo_i' (\mu)$]\label{thm:grad}
Let $i \in \sset{1, \dots, \nill}$ and $\mu \in \dom(\To)$.
Furthermore, set $\Phi_i \coloneqq  \To_i(\mu)$ and let
$\Phi_i^{*}$ denote the solution of the adjoint problem
\begin{align}\label{eq:phidag}
\kl{ -\theta \cdot \nabla_x
+
\kl{ \mua + \mus - \mus\Ko} } \Phi_i^{*}
 = - \Ao^\ast \kl{ \mua  (\wave_i^* v ) }
\end{align}
with $\Phi_i^{*}|_{\Gamma_+} = 0$.
Then, $\Fo_i' (\mu)^* \colon \YY \to \XX $ is given by
\begin{equation}\label{eq:adjoint}
	\Fo_i' (\mu)^* v =
	\left[\begin{array}{l}
	\Ao ( \Phi_i^* \Phi_i) + (\Ao \Phi_i)  (\wave_i^* v) \\
	\Ao \bigl( [  (\Io- \Ko)( \Phi_i^*) ] \Phi_i \bigr)
	\end{array} \right] \,.
\end{equation}
 \end{theorem}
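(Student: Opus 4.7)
The plan is to compute $\sinner{\Fo_i'(\mu) h}{v}_\YY$ for arbitrary feasible $h\in\XX$ and $v\in\YY$, rearrange it into the form $\sinner{h}{\Psi(v)}_\XX$, and read off $\Fo_i'(\mu)^*v = \Psi(v)$. Using \eqref{eq:derF} and the boundedness of $\wave_i$,
$$\sinner{\Fo_i'(\mu) h}{v}_\YY = \sinner{h_a \Ao\Phi_i}{\wave_i^* v}_{L^2(\Om)} + \sinner{\mu_a \Ao \To_i'(\mu)(h)}{\wave_i^* v}_{L^2(\Om)}.$$
The first summand already has the correct form: $\sinner{h_a \Ao\Phi_i}{\wave_i^* v}_{L^2(\Om)} = \sinner{h_a}{(\Ao\Phi_i)(\wave_i^* v)}_{L^2(\Om)}$, which produces the second term in the first component of \eqref{eq:adjoint}. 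The work is therefore to transform the second summand into a linear form in $h$.

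Writing $u \coloneqq \To_i'(\mu)(h)\in \WW_0$ and $L \coloneqq \theta\cdot\nabla_x + \mua + \mus(\Io-\Ko)$, equation \eqref{eq:derT} gives $Lu = -(h_a + h_s(\Io-\Ko))\Phi_i$ with $u|_{\Gamma_-}=0$. Moving the $\mua$ into the $\WW$-pairing via $\Ao^*$ yields
$$\sinner{\mu_a \Ao u}{\wave_i^* v}_{L^2(\Om)} = \sinner{u}{\Ao^*(\mua\wave_i^* v)}_{L^2(\Om\times\sph^{d-1})},$$
so the driving term in \eqref{eq:phidag} is exactly what appears here. The central step is Green's identity for the transport operator: for $u\in\WW_0$ and $\Phi_i^*\in\WW$ with $\Phi_i^*|_{\Gamma_+}=0$,
$$\sinner{\theta\cdot\nabla_x u}{\Phi_i^*} + \sinner{u}{\theta\cdot\nabla_x\Phi_i^*} = \int_{\Gamma_-\cup\Gamma_+} u\,\Phi_i^*\, (\nu\cdot\theta)\, \rmd(x,\theta) = 0,$$
the boundary integral vanishing because of the complementary inflow/outflow conditions. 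Combined with the self-adjointness of the multiplications by $\mua,\mus$ and with $\Ko^*=\Ko$ (which follows from the postulated symmetry of the scattering kernel $k$), this gives $\sinner{Lu}{\Phi_i^*} = \sinner{u}{L^*\Phi_i^*}$, where $L^* = -\theta\cdot\nabla_x + \mua + \mus(\Io-\Ko)$. Hence
$$\sinner{u}{\Ao^*(\mua\wave_i^* v)} = -\sinner{u}{L^*\Phi_i^*} = -\sinner{Lu}{\Phi_i^*} = \sinner{(h_a + h_s(\Io-\Ko))\Phi_i}{\Phi_i^*}_{L^2(\Om\times\sph^{d-1})}.$$

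Finally I split this last pairing into an $h_a$- and an $h_s$-contribution. Using Fubini to pull the $x$-dependent factor $h_a(x)$ (resp.\ $h_s(x)$) outside the $\theta$-integration identifies the two contributions as $\sinner{h_a}{\Ao(\Phi_i^*\Phi_i)}_{L^2(\Om)}$ and $\sinner{h_s}{\Ao(\Phi_i\,(\Io-\Ko)\Phi_i^*)}_{L^2(\Om)}$; in the latter I use once more the symmetry of $k$ inside the $\theta$-integral to see that $\Ao(\Phi_i^*(\Io-\Ko)\Phi_i) = \Ao(((\Io-\Ko)\Phi_i^*)\Phi_i)$. Collecting all three contributions and identifying them as components of $\sinner{h}{\Fo_i'(\mu)^*v}_\XX$ yields \eqref{eq:adjoint}.

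The main obstacle is the rigorous Green's identity on $\WW$: one must ensure that $\Phi_i^*$ belongs to $\WW$ with a well-defined $L^2(\Gamma_+,|\nu\cdot\theta|)$-trace vanishing on $\Gamma_+$, and that the boundary integral is interpreted correctly against the trace of $u$ on $\Gamma_+$ and of $\Phi_i^*$ on $\Gamma_-$. Existence and regularity of $\Phi_i^*$ follow by applying Theorem \ref{thm:rte} to the RTE on the reversed characteristics (substitute $\theta\mapsto -\theta$), while the trace theorem underlying the Green's identity is exactly the one encoded in the definition of $\WW$ and used in \cite{EggSch14b}; aside from these functional-analytic prerequisites, the remaining manipulations are routine.
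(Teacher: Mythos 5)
Your derivation is correct: the paper itself gives no proof of this theorem (it defers to \cite{HalNeuRab15}), and your adjoint-state computation --- pairing $\Fo_i'(\mu)h$ with $v$, pulling $\wave_i$ across, introducing $\Phi_i^*$ via the transport Green's identity with the complementary inflow/outflow boundary conditions, substituting $Lu=-(h_a+h_s(\Io-\Ko))\Phi_i$, and using the symmetry of $k$ to rewrite the scattering component --- is exactly the standard argument used in that reference. The sign conventions, the placement of $\Ao^*$, and the final identification of the three contributions with \eqref{eq:adjoint} all check out, and you correctly flag the only nontrivial prerequisites (well-posedness of the reversed-characteristics problem and the trace/Green identity on $\WW$).
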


\begin{proof}
See \cite{HalNeuRab15}.
\end{proof}

Given data $\data_1, \dots, \data_{\nill} \in  \Y$, most numerical schemes  for QPAT use gradients of the  partial data-fidelity terms $\Fr_I  \colon \dom(\To) \to  \R$  for $I  \subseteq \sset{1, \dots, \nill}$, where
\begin{equation}\label{diffF}
	\Fr_I(\mu)
  = \sum_{i \in I} \Fr_i(\mu)
\quad\text{ with }  \quad 	\Fr_i(\mu)
	 \coloneqq
	  \frac{1}{2}
	 \norm{ \Fo_i (\mu) -  \data_i}_{\YY}^{2} \,.
\end{equation}
By the chain rule, the gradient  of  $\Fr_I $ is given by
$ \nabla \Fr_I(\mu)  = \sum_{i \in I} \nabla \Fr_i(\mu) $ with
$\nabla \Fr_i(\mu) = \Fo_i'(\mu)^* \kl{\Fo_i(\mu) - \data_i} $, where
$\Fo_i'(\mu)^*  $  can be computed  by Theorem \ref{thm:grad}.
Convergence of schemes such as the (stochastic) proximal gradient method {considered in} the following section  require the Lipschitz continuity of $\nabla \Fr_I$, which will be shown in the following theorem.

\begin{theorem}[Lipschitz continuity  of $\nabla \Fr_I$]
For any data $\data_1, \dots, \data_\nill \in  \Y$  and
any subset $I  \subseteq \sset{1, \dots, \nill}$, \mbox{the  map} $\mu \mapsto \nabla \Fr_I(\mu)$ 
is Lipschitz-continuous.
\end{theorem}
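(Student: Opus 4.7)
The plan is to first reduce the claim to Lipschitz continuity of each individual gradient $\nabla \Fr_i$, since $I$ is finite and $\nabla \Fr_I = \sum_{i \in I} \nabla \Fr_i$. For a single index $i$, I would use the product-rule decomposition
\begin{equation*}
\nabla \Fr_i(\mu_1) - \nabla \Fr_i(\mu_2)
=
\Fo_i'(\mu_1)^{*}\bigl(\Fo_i(\mu_1) - \Fo_i(\mu_2)\bigr)
+
\bigl(\Fo_i'(\mu_1)^{*} - \Fo_i'(\mu_2)^{*}\bigr)\bigl(\Fo_i(\mu_2) - \data_i\bigr).
\end{equation*}
The task then reduces to two quantitative ingredients: a uniform operator-norm bound on $\Fo_i'(\mu)^{*}$ for $\mu \in \dom(\To)$, and local Lipschitz dependence of $\mu \mapsto \Fo_i'(\mu)^{*}$ in operator norm.

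For the first summand, Theorem~\ref{thm:F-cont-diff}(a) gives that $\Fo_i$ is Lipschitz on $\dom(\To)$, so $\snorm{\Fo_i(\mu_1) - \Fo_i(\mu_2)}_\YY \le L_i \snorm{\mu_1 - \mu_2}_\XX$. A uniform bound on $\snorm{\Fo_i'(\mu)^{*}} = \snorm{\Fo_i'(\mu)}$ follows from the explicit representations \req{derT}--\req{derF}: the a priori estimate \req{apriori} of Theorem~\ref{thm:rte} controls $\snorm{\To_i(\mu)}_\WW$ uniformly on $\dom(\To)$ (which is bounded since $\Omega$ is bounded and $0 \le \mua \le \overline{\mu}_a$, $0 \le \mus \le \overline{\mu}_s$), and applying \req{apriori} once more to the source $-(h_a + h_s - h_s\Ko)\To_i(\mu)$ yields $\snorm{\To_i'(\mu)(h)}_\WW \le C \snorm{h}_\XX$ with $C$ independent of $\mu \in \dom(\To)$. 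Composing with the bounded operators $\Ao$ and $\wave_i$ and using boundedness of $\mua$ delivers the uniform bound on $\snorm{\Fo_i'(\mu)}$. For the second summand, $\snorm{\Fo_i(\mu_2) - \data_i}_\YY$ is bounded on $\dom(\To)$ (Lipschitz continuity plus boundedness), so it remains to establish Lipschitz dependence of $\mu \mapsto \Fo_i'(\mu)^{*}$.

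The main obstacle is precisely this last point. I would prove it by first establishing Lipschitz dependence of the RTE solution operator $\mu \mapsto \To_i(\mu)$ in $\WW$, then of the adjoint-RTE solution $\mu \mapsto \Phi_i^{*}$ defined by \req{phidag}. Both follow from Theorem~\ref{thm:rte} applied to the difference equations: if $\Phi^{(j)} = \To_i(\mu^{(j)})$, then $\Phi^{(1)} - \Phi^{(2)}$ solves an RTE with zero boundary data and source $-(\mu^{(1)}_a - \mu^{(2)}_a + (\mu^{(1)}_s - \mu^{(2)}_s)(\Io - \Ko))\Phi^{(2)}$; boundedness of $\Phi^{(2)}$ in $\WW$ together with \req{apriori} yields $\snorm{\Phi^{(1)} - \Phi^{(2)}}_\WW \le C\snorm{\mu^{(1)} - \mu^{(2)}}_\XX$, and an analogous argument handles the adjoint problem (where the right-hand side in \req{phidag} also depends on $\mua$ linearly). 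Feeding these Lipschitz estimates into the formulas \req{derT}--\req{derF} for $\Fo_i'(\mu)$, respectively \req{adjoint} for $\Fo_i'(\mu)^{*}$, and using that products of uniformly bounded and Lipschitz maps are Lipschitz, yields operator-norm Lipschitz continuity of $\mu \mapsto \Fo_i'(\mu)^{*}$.

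Combining the two estimates in the decomposition with the triangle inequality yields
\begin{equation*}
\snorm{\nabla \Fr_i(\mu_1) - \nabla \Fr_i(\mu_2)}_\XX \le L_i'\, \snorm{\mu_1 - \mu_2}_\XX,
\end{equation*}
and summing over $i \in I$ finishes the proof. The technical core is the stability of the forward and adjoint RTE solutions with respect to $\mu$; everything else is chain-rule bookkeeping built on the a priori estimate \req{apriori}.
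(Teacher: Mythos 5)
Your top-level strategy is sound and its technical core coincides with the paper's, but the organization differs: the paper does not use your splitting $\nabla \Fr_i(\mu_1)-\nabla \Fr_i(\mu_2) = \Fo_i'(\mu_1)^{*}\bigl(\Fo_i(\mu_1)-\Fo_i(\mu_2)\bigr) + \bigl(\Fo_i'(\mu_1)^{*}-\Fo_i'(\mu_2)^{*}\bigr)\bigl(\Fo_i(\mu_2)-\data_i\bigr)$ into a uniformly bounded adjoint plus an operator-Lipschitz adjoint. Instead it writes $\nabla\Fr(\mu)-\nabla\Fr(\tilde\mu)$ componentwise via the explicit adjoint formula \req{adjoint} and performs add-and-subtract manipulations directly on the products $\To^*(\mu)\To(\mu)$, $(\Ao\To(\mu))(\wave^*v(\mu))$ and $\Ao\bigl((\Io-\Ko)(\To^*(\mu))\To(\mu)\bigr)$. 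Both routes reduce to what you correctly identify as the crux: Lipschitz stability of the forward and adjoint transport solutions with respect to $\mu$, combined with uniform bounds. Your reduction to a single index $i$ and the use of Theorem~\ref{thm:F-cont-diff} for the Lipschitz continuity of $\Fo_i$ are fine.

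There is, however, one recurring step that fails as written. When you apply the a priori estimate \req{apriori} to the difference equation with source $-\bigl(\delta\mu_a+\delta\mu_s(\Io-\Ko)\bigr)\Phi^{(2)}$ (and likewise to the source $-(h_a+h_s-h_s\Ko)\To_i(\mu)$ for the uniform bound on $\To_i'(\mu)$), you need that source to lie in $L^2(\Om\times\sph^{d-1})$ with norm controlled by $\snorm{\delta\mu}_{\XX}$, respectively $\snorm{h}_{\XX}$. Since $\delta\mu_a$ is measured in $L^2(\Om)$ and $\Phi^{(2)}$ is an $L^2$-type object, Cauchy--Schwarz only places the product in $L^1$; ``boundedness of $\Phi^{(2)}$ in $\WW$'' is therefore not sufficient. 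What is actually required is a bound on $\snorm{\Phi^{(2)}}_{L^\infty(\Om\times\sph^{d-1})}$, uniform over $\mu\in\dom(\To)$, i.e.\ an a priori estimate for the RTE in the supremum norm; the same issue reappears for the products $\Phi_i^*\Phi_i$, $(\Io-\Ko)(\Phi_i^*)\Phi_i$ and $(\Ao\Phi_i)(\wave_i^*v)$ entering \req{adjoint}, and for the adjoint problem \req{phidag}. The paper's proof relies on exactly this ingredient: it bounds the relevant term in \req{aux-lip} by a constant times $\norm{\To^*(\mu)}^2_{L^\infty(\Om\times\sph^{d-1})}\norm{\To(\mu)-\To(\tilde\mu)}^2_{L^2(\Om)}$ and explicitly invokes ``an analog of Theorem~\ref{thm:rte} for the supremum norm.'' Once you add this $L^\infty$ estimate for the forward and adjoint transport problems, the remaining steps of your argument --- the finite-sum reduction, the uniform operator-norm bound on $\Fo_i'(\mu)^*$, and the operator-Lipschitz dependence of $\mu\mapsto\Fo_i'(\mu)^*$ --- go through essentially as you describe.
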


\begin{proof} Without loss of generality, we assume $\nill=1$, $I =  \sset{1}$ and write $\data  =\data_1$, $\Fr = \Fr_{\sset{1}}$,
$\To=\To_1$, 
$\wave=\wave_1$, 
and $v(\mu) = \Fo(\mu)-\data$.
For any $\mu \in \dom(\To)$, let $\To^*(\mu)$  denote the solution of \eqref{eq:phidag}
with $v(\mu)$ in place of $v$.
Then, for any $\mu, \tilde{\mu} \in \dom(\To)$,
\begin{multline} \label{eq:aux-lip}
 \norm{\nabla \Fr(\mu)-\nabla \Fr(\tilde{\mu}) }^2_{\X} \\ =
  \Vert \Ao( \To^*(\mu) \To(\mu)) + (\Ao \To(\mu)) (\wave^* v(\mu))-
  \Ao ( \To^*(\tilde{\mu}) \To(\tilde{\mu})) - (\Ao \To(\tilde{\mu})) (\wave^* v(\tilde{\mu}))\Vert^2_{L^2(\Om)}\\
  + \norm{
  \Ao \bigl( ( \Io- \Ko)( \To^*(\mu)) \To(\mu) \bigr) -   
  \Ao \bigl( ( \Io- \Ko)( \To^*(\tilde{\mu}) )\To(\tilde{\mu}) \bigr)
  }^2_{L^2(\Om)} \,.
\end{multline}
The  second term in \eqref{eq:aux-lip} can be bounded by 
\begin{multline*}
2\norm{   \Ao \bigl( ( \Io- \Ko)( \To^*(\mu)) \To(\mu) \bigr) -   
  \Ao \bigl( ( \Io- \Ko)( \To^*(\mu ) )\To(\tilde{\mu}) \bigr)}_{L^2(\Om)}^2 \\
+
2 \norm{ \Ao \bigl( ( \Io- \Ko)( \To^*(\mu)) \To(\tilde{\mu}) \bigr) 
+    
\Ao \bigl( ( \Io- \Ko)( \To^*(\tilde{\mu}) )\To(\tilde{\mu}) \bigr)}_{L^2(\Om)}^2
\\ \leq  4 \kl{\norm{k}_\infty + 1}^2 \abs{\sph^{d-1}}   \norm{\To^*(\mu)}_{L^\infty(\Om \times \sph^{d-1})}^2
   \norm{ \To(\mu)-\To(\tilde{\mu}) }_{L^2(\Om)}^2  \,.
\end{multline*}
The squared norm of the difference 
$\Ao( \To^*(\mu) \To(\mu)) - \Ao ( \To^*(\tilde{\mu}) \To(\tilde{\mu}))$ in the first term in \eqref{eq:aux-lip} is estimated in a similar manner. Furthermore, we have 
\begin{multline*}
 \norm{ (\Ao \To(\mu)) (\wave^* v(\mu))-(\Ao \To(\tilde{\mu})) (\wave^* v(\mu))
 +(\Ao \To(\tilde{\mu})) (\wave^* v(\mu))-(\Ao \To(\tilde{\mu})) (\wave^* v(\tilde{\mu})) }_{L^2(\Om)}^2 \\
 \leq 2  \norm{ (\wave^* v(\mu))\Ao [\To(\mu)-\To(\tilde{\mu})] }_{L^2(\Om)}^2 
 + 2\norm{\Ao \To(\tilde{\mu})(\wave^* (v(\mu)-v(\tilde{\mu})))}^2_{L^2(\Om)}  \,.
\end{multline*}
Noting that $\Ao$, $\wave$ and $\wave^*$ are linear and bounded, 
Theorem \ref{thm:F-cont-diff}, an analog of Theorem \ref{thm:rte} for the supremums 
norm,  and the  computations  above yield the Lipschitz continuity of $\nabla \Fr$.
\end{proof}

\section{The Stochastic Proximal Gradient Method for QPAT}
\label{sec:SPG}
\subsection{Formulation of the  Inverse Problem}

The inverse problem of multi-source QPAT consists in finding
$\mu^\star \in \XX$ from measured data
\begin{align} \label{eq:ipi}
\data_i     = \Fo_i  \skl{ \mu^\star} + z_i   \quad \text{ for } i = 1, \dots ,  \nill \,.
\end{align}
Here, $\mu^\star = \skl{ \mua^\star, \mus^\star}$ are the unknowns to be estimated, $z_i$ are the unknown error vectors, and $ v_1, \dots,  v_N $ are the given noisy data.
Using the notation
\begin{align*}
v &\coloneqq (v_1, \dots, v_N) \in \YY^\nill,\\
\Fo &\coloneqq (\Fo_1,  \dots, \Fo_N) \colon \dom(\To)\to \YY^\nill,
\end{align*}
we can write \req{ipi} in the alternative form
\begin{align} \label{eq:ip2}
 \text{Estimate $\mu^*  \in \XX$ from   }
  v      = \Fo   \skl{ \mu^*} + z      \,.
\end{align}
Here,  $z \in  \YY^N$ denotes the error vector.

There are, at least, two different strategies to address such an  inverse problem: Tikhonov type regularization on  the one  and
iterative methods on the other hand. In this section, we give an overview of such methods. In particular, we describe
proximal stochastic gradient methods (for minimizing the Tikhonov functional),
which seem particularly well suited for multi-source QPAT but have not been investigated
yet for that purpose.

\subsection{Tikhonov Regularization in QPAT}

In this section, we consider a quadratic Tikhonov regularization term for solving \req{ipi}.
Let
\begin{align*}
\Lo \colon \dom(\Lo) \subseteq \XX \to \ZZ&\colon \mu \mapsto \Lo \mu   \
\end{align*}
be a linear, densely defined, and possibly unbounded operator between $\XX$ and another Hilbert space $\kl{\ZZ, \inner{\edot}{\edot}_\ZZ}$ and  set $\dom \coloneqq \dom \cap \dom (\Lo)$. In this context, any element $\mu^+ \in \dom$ with   $
\norm{\Lo \mu^+} = \min  \sset{ \norm{\Lo \mu} \mid \Fo(\mu)  = \data }$ is called an
$\norm{\Lo (\edot) }$-minimizing solution of $\Fo \mu = \data$.
Tikhonov regularization  with regularization term $\frac{1}{2}\norm{\Lo \mu }_\ZZ^2 $ consists  in
computing  a minimizer of the generalized Tikhonov functional
$\tfun_{v,\lambda} \colon \XX
\to \R \cup \set{\infty}$, defined  by
\begin{equation}\label{eq:tik}
\tfun_{v,\lambda}(\mu )
\coloneqq
\begin{cases}
\frac{1}{2}
	  \norm{\Fo( \mu) - v}^2
	+ \frac{ \lambda}{2}\norm{\Lo \mu }^2,
	& \text{if } \mu \in \dom,  \\
	\infty, & \text{otherwise} \,.
	\end{cases}
\end{equation}
Here, $\lambda > 0$ denotes the regularization parameter that acts as a trade-off between the data fitting term and stability.

\begin{theorem}[Well-posedness and convergence]\label{thm:tik}\mbox{}
\begin{enumerate}\item
For any $v  \in \YY$  and any
$\la >0$,
the Tikhonov functional  $\tfun_{\la, v}$
has at least one minimizer.
\item
Let $v  \in \ran(\Fo)$,  $(\delta_m)_{m \in \N} \in (0, \infty)^\N$, $(v^m)_{m \in \N} \in \YY^\N$ with $\norm{v  - v^m} \leq \delta_m$.
Suppose further that $(\la_m)_{m \in \N} \in (0, \infty)^\N$
satisfies    $\la_m \to 0$ and $\delta_m^2/\lambda_m \to  0$ as $m \to \infty$.
Then:

\begin{itemize}[leftmargin=13 mm,labelsep=3mm]
\item Every sequence $(\mu^m)_{m\in\N}$ with $\mu^m \in  \argmin \tfun_{v^m,\lambda_m}$ has a weakly converging subsequence.
\item
The limit  of every weakly  convergent subsequence of $(\mu^m)_{m \in \N}$ is an $\norm{\Lo (\edot) }$-minimizing solution of $\Fo \mu = \data$.

\item If the  $\norm{\Lo (\edot) }$-minimizing solution  of $\Fo \mu = \data$ is unique and denoted by $\mu^+$, then $(\mu^m) \rightharpoonup \mu^+$.
\end{itemize}
\end{enumerate}
\end{theorem}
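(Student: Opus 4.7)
The plan is to follow the classical Engl--Hanke--Neubauer / Scherzer et al.\ template for Tikhonov regularization of nonlinear inverse problems in Hilbert spaces, and to identify at the outset the few weak-topology ingredients needed for the specific operator $\Fo$ considered here. The crucial ingredients are: (i) the feasible set $\dom(\To)$ is convex, bounded in $L^\infty$, and closed in $\XX$, hence weakly sequentially closed and, since $\Om$ is bounded, weakly bounded in $\XX$; (ii) the penalty $\mu \mapsto \tfrac12\norm{\Lo\mu}_\ZZ^2$ is convex, proper and weakly lower semicontinuous on $\dom(\Lo)$ (this uses $\Lo$ closed and densely defined; the standard graph argument gives that if $\mu_n\rightharpoonup \mu$ with $\Lo\mu_n\rightharpoonup\eta$ then $\mu\in\dom(\Lo)$ and $\Lo\mu=\eta$); (iii) the nonlinear forward operator $\Fo$ is weakly sequentially closed on $\dom(\To)$, which follows from the Lipschitz continuity in Theorem~\ref{thm:F-cont-diff} combined with the compactness of the scattering operator $\Ko$ and of the averaging operator $\Ao$ (the RTE solution operator $\To$ maps weakly convergent sequences in $\dom(\To)$ to weakly convergent sequences in $\WW$, which, after applying $\Ao$ and $\wave$, upgrades to strong $\YY$-convergence along the data-fidelity term).

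For part (1), I would apply the direct method. Since $\tfun_{v,\la}\ge 0$ and is proper on $\dom$ (assuming $\dom$ is non-empty; otherwise the claim is vacuous), take a minimizing sequence $(\mu_n)\subseteq\dom$. From $\tfun_{v,\la}(\mu_n)\to \inf \tfun_{v,\la}$ we obtain uniform bounds on $\norm{\Lo\mu_n}_\ZZ$ and $\norm{\Fo(\mu_n)-v}_\YY$. The $L^\infty$ bounds defining $\dom(\To)$ translate via boundedness of $\Om$ to an $\XX$-bound on $\mu_n$, so a subsequence satisfies $\mu_{n_k}\rightharpoonup \mu^\star\in\dom(\To)$. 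Ingredient (ii) gives $\mu^\star\in\dom(\Lo)$ with $\norm{\Lo\mu^\star}\le\liminf\norm{\Lo\mu_{n_k}}$, and ingredient (iii) gives $\norm{\Fo(\mu^\star)-v}\le \liminf\norm{\Fo(\mu_{n_k})-v}$. Adding these two inequalities shows $\mu^\star\in\argmin \tfun_{v,\la}$.

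For part (2), let $\mu^+$ be any $\norm{\Lo(\,\cdot\,)}$-minimizing solution (which exists under the standing assumption $v\in\ran(\Fo)$ by the same argument as in~(1)), and use the minimizing property of $\mu^m\in\argmin \tfun_{v^m,\la_m}$ against $\mu^+$ to get
\begin{equation*}
\tfrac12\norm{\Fo(\mu^m)-v^m}^2 + \tfrac{\la_m}{2}\norm{\Lo\mu^m}^2
\;\le\; \tfrac12\delta_m^2 + \tfrac{\la_m}{2}\norm{\Lo\mu^+}^2 \,.
\end{equation*}
Dividing by $\la_m$ and invoking $\delta_m^2/\la_m\to 0$ yields $\limsup_m\norm{\Lo\mu^m}\le\norm{\Lo\mu^+}$ and $\norm{\Fo(\mu^m)-v^m}^2\le \delta_m^2+\la_m\norm{\Lo\mu^+}^2\to 0$. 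Boundedness in $\XX$ follows again from $\mu^m\in\dom(\To)$, so a subsequence converges weakly to some $\bar\mu\in\dom(\To)$; ingredient (iii) combined with $v^m\to v$ in $\YY$ gives $\Fo(\bar\mu)=v$, and ingredient (ii) together with weak lower semicontinuity gives $\norm{\Lo\bar\mu}\le\liminf\norm{\Lo\mu^m}\le\norm{\Lo\mu^+}$, so $\bar\mu$ is itself $\norm{\Lo(\,\cdot\,)}$-minimizing. This proves the first two bullets; the third is the standard subsequence--uniqueness argument: every weakly convergent subsequence has the same limit $\mu^+$, hence the entire sequence converges weakly to $\mu^+$.

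The main obstacle is ingredient (iii): the weak sequential closedness of the nonlinear $\Fo$. Lipschitz continuity alone is not enough for a nonlinear map between Hilbert spaces, and Theorem~\ref{thm:F-cont-diff} is proved only for the strong topology. I would therefore devote the bulk of the work to showing that if $\mu_n\rightharpoonup\mu$ in $\XX$ with $\mu_n\in\dom(\To)$, then $\Fo_i(\mu_n)\to\Fo_i(\mu)$ strongly in $\YY$. The argument factors through the RTE: the a priori estimate~\eqref{eq:apriori} bounds $\To_i(\mu_n)$ in $\WW$; compactness of the integral operator $\Ko$ (which has bounded kernel) and of the averaging operator $\Ao$ into $L^2(\Om)$ upgrade the weak convergence of $\mu_n$ to strong convergence of the product $\mu_{a,n}\Ao\To_i(\mu_n)$ in $L^2(\Om)$; boundedness of $\wave_i$ then transports this to $\YY$. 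This weak-to-strong upgrade is the only non-textbook step; everything else is the standard template already recorded in~\cite{Engl96,scherzer2009variational}.
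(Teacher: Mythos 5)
The paper offers no argument of its own for this theorem --- its proof is the citation ``See \cite{HalNeuRab15}'' --- and the proof carried out there is precisely the template you describe: the standard well-posedness/stability/convergence machinery for Tikhonov regularization of nonlinear problems, conditioned on weak sequential closedness of the forward operator, with all operator-specific work concentrated in establishing that closedness (this is what the ``sequential continuity'' in Theorem~\ref{thm:F-cont-diff} refers to, namely weak sequential continuity). Your parts (1) and (2) are the textbook argument and are correct given ingredients (i)--(iii); ingredients (i) and (ii) are unproblematic.

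The genuine issue sits in your sketch of ingredient (iii), in two places. First, the conclusion you announce --- $\Fo_i(\mu_n)\to\Fo_i(\mu)$ \emph{strongly} in $\YY$ whenever $\mu_n\rightharpoonup\mu$ in $\dom(\To)$ --- is false in general and also more than you need. The heating term is $\mu_{a,n}\,\Ao\To_i(\mu_n)$; even granting that $\Ao\To_i(\mu_n)\to\Ao\To_i(\mu)$ strongly in $L^2(\Om)$ (which is really the velocity averaging lemma, using the $\WW$-bound on $\theta\cdot\nabla_x\To_i(\mu_n)$, not an intrinsic compactness of $\Ao$ on $L^2(\Om\times\sph^{d-1})$), the factor $\mu_{a,n}$ converges only weakly, so the product converges only weakly; and since $\wave$ is an $L^2$-isometry up to a constant, no strong $\YY$-convergence can be expected. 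What does hold, and what your parts (1) and (2) actually use (via weak lower semicontinuity of the norm and uniqueness of weak limits), is $\Fo_i(\mu_n)\rightharpoonup\Fo_i(\mu)$, consistent with the ``weakly sequentially closed'' requirement you correctly state at the outset. Second, and more substantively, the sketch skips the identification of the limit: the a priori bound gives $\To_i(\mu_{n_k})\rightharpoonup\Phi$ in $\WW$ along a subsequence, but to conclude $\Phi=\To_i(\mu)$ you must pass to the limit in the weak formulation of the RTE in the terms $\mu_{a,n}\Phi_n$ and $\mu_{s,n}\Ko\Phi_n$, i.e., in products of two merely weakly convergent sequences. Compactness of $\Ko$ does not resolve this: $\Ko$ acts only in the velocity variable and is not compact on $L^2(\Om\times\sph^{d-1})$. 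The step that closes the argument is again velocity averaging applied to the weighted averages $\int_{\sph^{d-1}}\Phi_n\psi\,\rmd\theta$ and $\int_{\sph^{d-1}}\int_{\sph^{d-1}}k(\theta,\theta')\Phi_n(\cdot,\theta')\psi(\cdot,\theta)\,\rmd\theta'\rmd\theta$ obtained after testing, which converge strongly in $L^2(\Om)$ and can therefore be paired with the weakly convergent coefficients. With that lemma made explicit, your proof closes and coincides with the one in \cite{HalNeuRab15}.
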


\begin{proof}
See \cite{HalNeuRab15}.
\end{proof}

\subsection{The  Proximal Stochastic Gradient Algorithm for QPAT}

Depending on the particular  choice of $\Lo$, the Tikhonov functional \req{tik}  may be   ill-conditioned. To address this  issue  in \cite{HalNeuRab15},
 we proposed the   proximal gradient algorithm for minimizing \req{tik}, which~is a very flexible algorithm for minimizing functionals of the form  $\Fr + \Gr$,
where $\Fr$ is smooth and $\Gr$ is convex
(see, for example,  {{\cite{combettes2005signal,bauschke2011convex}}}).  Here, we extend the approach to
the proximal stochastic gradient algorithm. Additionally, we  propose
computing the proximal step using Dykstra's projection algorithm.

\begin{itemize}
\item \textbf{Proximal gradient algorithm:}  The proximal gradient algorithm is a splitting method that iteratively computes explicit gradient steps for $\Fr$ and implicit proximal steps for $\Gr$. In our context, we take $\Fr$ as the data fidelity term and
\begin{equation}\label{eq:G-la}
    \Gr (\mu) =
    \Gr_\la(\mu) \coloneqq
    g_\la(\mu) + \II_{\dom}(\mu)
    \coloneqq
     \frac{\la}{2} \norm{\Lo \mu}^2 + \II_{\dom}(\mu) \,,
    \end{equation}
 where   $\II_{\dom}$ is the characteristic function taking the value zero inside $\dom$ and $\infty$ outside.
 The proximal gradient algorithm for minimizing the QPAT-Tikhonov functional
\eqref{eq:tik} reads
\begin{equation} \label{eq:PGla}
\mu^{k+1} = \prox_{s_k \Gr_\la}
\Bigl( \mu^k -  s_k \sum_{i=1}^\nill  \nabla  \Fr_i(\mu^k)    \Bigr)   \,.
\end{equation}
Here, $\prox_{s_k\Gr_\la}  \colon \XX \to \dom$ denotes the proximal mapping  corresponding to the functional $s_k\Gr_\la$,
\begin{equation}\label{eq:prox}
 \prox_{s_k\Gr_\la}(x) = \argmin
 \set{\frac{1}{2}\norm{ x  - (\edot)}^2  +
 s_k\Gr_\la } \,.
\end{equation}
Furthermore, $\nabla  \Fr_i(\mu^k)  $ is the gradient of the $i$-th data fidelity  term
computed in Theorem~\ref{thm:grad}.

\item \textbf{Dykstra's projection algorithm:}
The constraint quadratic optimization problem \req{prox} can efficiently be solved by a proximal variant of
Dykstra's projection
algorithm \cite{combettes2011proximal,combettes2005signal,bauschke2011convex}.
 For that purpose, we write
$s_k  \Gr_\la  =  \II_{\dom} + g$ with $g(x) \coloneqq \frac{s_k \lambda}{2}\norm{\Lo x}^2$.
Setting $x_0 = \mu$, $p_0=0$ and $q_0=0$,  Dykstra's projection algorithm for \req{prox} reads, for $m \in \N$,
\begin{align}
 y_m &= \prox_g (x_m+p_m),   \label{eq:dykstra1} \\
 x_{m+1} &=  \Po_{\dom} (y_m+q_m),  \label{eq:dykstra2} \\
 p_{m+1} &= x_m+p_m-y_m, \label{eq:dykstra3} \\
 q_{m+1} &= y_m+q_m-x_{m+1}.  \label{eq:dykstra4} 
\end{align}
Both proximal mapping in \req{dykstra1}  and the
projection in \req{dykstra2} can
be computed explicitly. In fact, one readily verifies that
\begin{align}
\prox_g(x)   &  = \kl{ \Io_{\X} + s_k \la \, \Lo^*\Lo }^{-1}x,  \\
\Po_{\dom}( \mu) &= \min\set{\overline \mu, \max\set{0, \mu }} \,.
\end{align}
Here, $\Io_{\X} $ is the identity operator on $\X$ and $\Po_{\dom}$ the projection
onto $\dom$.

\item  \textbf{Proximal stochastic gradient algorithm:}
The methods described  so far require in any iterative step the computation of the  full gradient
\begin{equation*}
\nabla \Fr(\mu)
= \sum_{i=1}^\nill \nabla \Fr_i(\mu)
\quad \text{ with }   \quad
\nabla \Fr_i(\mu) =  \Fo_i'(\mu)^* \kl{\Fo_i(\mu) - \data_i} \,.
\end{equation*}
{The evaluation} of each $\nabla \Fr_i(\mu) $ requires  the solution of the RTE and an adjoint problem and therefore is quite time-consuming. For multi-source QPAT, where $\nill>1$, a significant
acceleration may be obtained by a Kaczmarz strategy,
where in each iterative step only  one of  the summands
$\nabla \Fr_i(\mu) $ is used.
The resulting proximal stochastic gradient method for minimizing the
Tikhonov functional  \req{tik} in QPAT~reads
\begin{equation} \label{eq:proxsg}
    \mu^{k+1}  =
    \text{{$\prox_{s_k G_\la }$}} \kl{ \mu^k
    -  s_k     \nabla  \Fr_{i(k)}(\mu^k)    } \,,
\end{equation}
where $i(k) \in \set{1, \dots, \nill}$ is selected randomly for
the update in the $k$-th iteration.  Furthermore, $\prox_{s_k\Gr_\la}$  is the
proximal mapping  of $s_k\Gr_\la$  that can be computed by Dykstra algorithm
\req{dykstra1}--\req{dykstra4}  and $\nabla  \Fr_i(\mu^k)  $ is the gradient of the $i$-th data fidelity  term 
that can be computed by Theorem \ref{thm:grad}.

{One can also incorporate a block-iterative  (or mini-batch) strategy in the stochastic gradient method, meaning that a small subset of
$\{1,\dots,N\}$ of equations is used per iteration instead of a \mbox{single one}.  Such a variant could be  especially useful in the case of a  large number of different illumination patterns.
For more details about stochastic gradient methods,  see}    \cite{bertsekas2010incremental,bertsekas2011incremental,xiao2014proximal,duchi2009efficient,li2017averaged,pereyra2016survey} {and the references therein.}
{Note that, in general, convergence of  stochastic gradient methods requires asymptotically vanishing  step size} {\cite{pereyra2016survey}}.
\end{itemize}

\subsection{Iterative Regularization Methods}

An alternative class of algorithms  to address nonlinear inverse problems are iterative techniques.
The most basic  iterative method for solving the nonlinear inverse problem
$v =\Fo (\mu ) $ is the Landweber iteration. In the case that the  domain of definition $\dom$ is a proper subset, we have to combine the Landweber iteration with a projection step onto
$\dom$ as presented in this subsection.
The projected Landweber iteration applied  to multi-source QPAT reads
\begin{equation} \label{eq:p-landweber}
\mu^{k+1} =  \Po_{\dom}
\Bigl( \mu^k -  s_k \sum_{i=1}^\nill  \nabla  \Fr_i(\mu^k)  \Bigr)   \,,
\end{equation}
where $\nabla  \Fr_i $ is the gradient of $\Fr_i$ (see Equation~\eqref{diffF}),  and $\Po_{\dom}(\mu) = \min\set{\overline \mu, \max\set{0, \mu }}$
denotes the projection onto $\dom$.
In Tikhonov regularization, the regularity of solutions is enforced by  an explicitly included penalty.
In opposition to that, in iterative regularization methods, a stabilization effect is enforced by early stopping of the iteration.
A common stopping rule  is the discrepancy principle, where  iteration is stopped at the
smallest  index $k\in \N$  satisfying
$\snorm{v - \Fo(\mu^k)} \leq \tau\delta,$
where $\delta$  is an estimate for the noise and $\tau  \geq 1$.
Formally, the projected Landweber iteration  \eqref{eq:p-landweber}
arises as a special case of the proximal gradient iteration \eqref{eq:PGla} for
minimizing the Tikhonov functional, where the regularization parameter is taken as $\la=0$
and where the  proximal mapping  \eqref{eq:prox} reduces to the
orthogonal projection onto $\dom$.

In a similar manner, one can also use a stochastic version of the
projected Landweber iteration.
Using the loping strategy of \cite{de2008steepest,haltmeier2007kaczmarz,haltmeier2007kaczmarz2}  in order to stabilize the iterative process, the resulting  projected loping Landweber--Kaczmarz iteration  reads
\begin{align} \label{eq:plk}
\mu^{k+1} &= \Po_{\dom}
\Bigl( \mu^k -  s_k \om_k  \nabla  \Fr_{i(k)} (\mu^k)  \Bigr)   \,,
 \\ \label{eq:skip}
\om_k   &\coloneqq
\begin{cases}
      1,  & \snorm{ \Fo_{i(k)}(\mu^k) - \data_{i(k)}}_{\X}
      > \tau \delta_{i(k)}, \\
      0,  & \text{otherwise.}
\end{cases} \,
\end{align}
Here, $i(k) \in \set{1, \dots,  \nill}$ for any  $k\in\N$  may be randomly selected, $\tau > 1$
is an appropriately chosen  positive constant,
$\nabla  \Fr_i(\edot)$ is the gradient of the $i$-th data fidelity
term computed in Theorem \ref{thm:grad} and
$\Po_{\dom}(\mu) = \min\set{\overline \mu, \max\set{0, \mu }}$
denotes the projection onto $\dom$.
The {iteration} \req{plk}, \req{skip} terminates  if  $\snorm{\Fo_{i}(\mu^k) - \data_i} \leq \tau \delta$
for all $i \in \set{1, \dots, \nill}$.
It is worth mentioning that, for noise free data, we have $\om_k = 1$ for all $k$
and, therefore, in this special situation, the iteration  becomes
$   \mu^{k+1} = \Po_{\dom}  ( \mu^k -  s_k  \nabla  \Fr_{i(k)} (\mu^k)  ) $, which formally arises from the proximal stochastic  gradient  method \eqref{eq:proxsg} with  $\la=0$.
A convergence analysis of the loping Landweber--Kaczmarz method can be found in \cite{de2008steepest,haltmeier2007kaczmarz}.

\section{QPAT as Multilinear Inverse Problem}
\label{sec:ML}

Since the RTE is time-consuming to solve, we are looking for a suitable reformulation of the inverse problem in multi-source QPAT avoiding computation of a solution of the RTE in each iterative step. In this paper, we propose  to write  \req{ipi} as a multilinear inverse problem, where we add the RTE as a constraint instead of explicitly including its solution. The new formulation will again be
addressed by Tikhonov regularization and proximal stochastic gradient methods.

\subsection{Reformulation  as Multilinear Inverse Problem}

Recall the forward problem of QPAT governed by the RTE \eqref{eq:rte}.
With the abbreviation \mbox{$\Mo(\mu):=  \theta \cdot \nabla_x+ \mua  + \mus (\Io-\Ko)$},
the RTE can be written in compact form $\Mo(\mu) \Phi = q$, where $\mu = (\mu_a, \mu_s)$ is the unknown parameter pair.
In the case of exact data, the  multi-source problem in QPAT  \eqref{eq:ipi} then can
be reformulated as the problem of finding  the tuple
$\mathbf{z} \coloneqq  (\mu, (\Phi_i,H_i)_{i=1}^\nill) \in  \dom  \times (\WW
\times  L^2(\Om) )^\nill$ such that
\begin{equation}\label{eq:mull}
\begin{aligned}
\Mo(\mu)\Phi_i
& =  q_i && \text{ for } i = 1, \dots,\nill,
\\
 H_i
& =  \mu_a \, \Ao \Phi_i
&& \text{ for } i = 1, \dots,\nill,
\\
v_i & =  \wave(H_i)
&& \text{ for } i = 1, \dots, \nill  \,.
\end{aligned}
\end{equation}
Here, the index $i$ indicates  the $i$-th illumination, and
\mbox{$q_i \in \QQ$, $\Phi_i \in  \WW$,  $H_i \in L^2(\Om)$,
$v_i \in \Y$}  are the  corresponding  source, photon density, heating and
acoustical data, respectively, and $\Ao \Phi_i = \int_{\sph^{d-1}}
\Phi_i(\edot ,\theta)\rmd  \theta$ is the  averaging operator.
We call  \eqref{eq:mull} and resulting formulations below the multilinear
(MULL) formulation of QPAT.

\subsection{Application of Tikhonov Regularization}

In the case that the data $v_i$ are only known approximately, we    use
Tikhonov regularization for the stable solution of \eqref{eq:mull}. For that
purpose, we approximate \eqref{eq:mull} by the constrained optimization problem
\begin{equation}\label{eq:consTik}
\begin{aligned}
& \text{{$\min_{(\mu,\Phi_i,H_i)_{i=1}^{N}} $}}
 \kl{ \frac{1}{2}\sum_{i=1}^\nill \norm{v_i - \wave(H_i)}^2 + \frac{\la}{2}\norm{\Lo(\mu)}^2 +
\II_{\dom}(\mu) }, \\
&\text{s.t. }
\begin{cases}
\Mo(\mu)\Phi_i = q_i, \\
H_i  = \mu_a \, \Ao \Phi_i  \; \text{ for } i = 1, \dots, \nill \,.
\end{cases}
\end{aligned}
\end{equation}
Here, the operator $\Lo \mu = (\Lo_a\mu_a,\Lo_s \mu_s)$ is possibly unbounded, $\frac{\la}{2} \snorm{\Lo(\mu)}^2$ is the regularization term   and
$\la >0$ the regularization parameter.
Note that {\eqref{eq:consTik}} is equivalent to  {\eqref{eq:tik}} and therefore the well-posedness and convergence results of Theorem {\ref{thm:tik}} apply to
{\eqref{eq:consTik}} as well.

The constrained optimization problem {\eqref{eq:consTik}} proposed in this paper can be addressed by various solution
methods, for example using penalty methods or augmented Lagrangian techniques \cite{ito2008lagrange}.
\mbox{{In this paper,}} we use a  penalty approach for solving \eqref{eq:consTik}  where the constraints
are included as penalty~term. To simplify notation, we introduce the unconstraint  functionals
\begin{multline}\label{eq:functional}
\Jo^{(i)}(\mathbf{z}):= \frac{a_1}{2}\norm{\Mo(\mu)\Phi_i-q_i}^2+\frac{a_2}{2}\norm{\mu_a \, \Ao \Phi_i - H_i}^2\\
+\frac{a_3}{2}\norm{v_i-\wave(H_i)}^2
+\frac{\la}{2}\norm{\Lo(\mu)}^2 + \II_{\dom}(\mu) \,,
\end{multline}
for certain parameters $a_1, a_2, a_3 >0$ and $\mathbf{z}_i \coloneqq  (\mu, \Phi_i,H_i) \in  \QQ  \times   \WW \times  L^2(\Om) $. The sum of the unconstraint functionals \eqref{eq:functional} over all illuminations will actually be minimized in our numerical implementations.
For that purpose, we define
\begin{align*}
\Jo_1^{(i)}(\mathbf{z}) &= \frac{1}{2} \norm{\Mo(\mu)\Phi_i-q_i}^2, \\
\Jo_2^{(i)}(\mathbf{z}) &= \frac{1}{2} \norm{\mu_a \, \Ao \Phi_i   - H_i}^2,\\
\Jo_3^{(i)}(\mathbf{z}) &= \frac{1}{2} \norm{v_i-\wave(H_i)}^2,\\
\Jo_4^{(i)}(\mathbf{z}) &= \frac{1}{2} \norm{\Lo(\mu)}^2 \,.
\end{align*}
Then, we have $\Jo^{(i)}(\mathbf{z}) = \sum_{\ell=1}^4 a_\ell \Jo^{(i)}_\ell(\mathbf{z})+ \II_{\dom}(\mu)$.
For the approximate  solution of  \req{consTik}, we minimize  the unconstrained functional
 $\Jo (\mathbf{z}) = \sum_{i=1}^4    \Jo^{(i)} (\mathbf{z}) $, which can be written in the forms
\begin{align} \label{eq:pen}
\Jo(\mathbf{z}) &= \sum_{i=1}^\nill  \sum_{\ell=1}^4 a_\ell \Jo^{(i)}_\ell(\mathbf{z}) +
\II_{\dom}(\mu),
\\ \label{eq:pen2}
\Jo(\mathbf{z})  &= \sum_{i=1}^\nill  \sum_{\ell=1}^3 a_\ell\Jo^{(i)}_\ell(\mathbf{z})
+  \frac{\la}{2}\norm{\Lo(\mu)}^2   +  \II_{\dom}(\mu).
\end{align}
(Here and below, we also write $a_4 =  \la$, if it simplifies notation.)
The formulations \eqref{eq:pen} as well as \eqref{eq:pen2} can be
solved by various optimization techniques.  In particular, as  the functionals
are given as the sum of simpler terms, the  stochastic (proximal) gradient method
is particularly appealing.

\subsection{Solution of the MULL Formulation of QPAT Using
Stochastic Gradient Methods}

For solving QPAT in the novel MULL formulation \eqref{eq:mull}, we use stochastic gradient
 methods similar to previous sections. For that purpose, we require the gradients (determining the steepest descent directions) of the
individual functionals $\Jo^{(i)}_\ell(\mathbf{z}_i)$ with  respect to $\mathbf{z}_i=(\mu,\Phi_i,H_i)$, which are given as
  \begin{align} \label{eq:grad-m1}
 \nabla_{\mu_a}\Jo_1^{(i)}(\mathbf{z}) &= \Phi_i (\Mo(\mu)\Phi_i-q_i), \\ \label{eq:grad-m2}
 \nabla_{\mu_s}\Jo_1^{(i)}(\mathbf{z}) &= (\Io-\Ko)\Phi_i (\Mo(\mu)\Phi_i-q_i), \\ \label{eq:grad-m3}
 \nabla_{\Phi_i}\Jo_1^{(i)}(\mathbf{z}) &= \Mo(\mu) (\Mo(\mu)\Phi_i-q_i), \\ \label{eq:grad-m4}
 \nabla_{\mu_a}\Jo_2^{(i)}(\mathbf{z}) &=  (\Ao \Phi_i) (\mu_a\Ao \Phi_i-H_i), \\ \label{eq:grad-m5}
 \nabla_{H_i}\Jo_2^{(i)}(\mathbf{z}) &= -(\mu_a\Ao \Phi_i-H_i), \\ \label{eq:grad-m6}
 \nabla_{\Phi_i}\Jo_2^{(i)}(\mathbf{z}) &= \Ao^{*} [\mu_a(\mu_a\Ao \Phi_i-H_i)], \\ \label{eq:grad-m7}
 \nabla_{H_i}\Jo_3^{(i)}(\mathbf{z}) &= -\wave^T (v_i-\wave(H_i)), \\ \label{eq:grad-m8}
 \nabla_{\mu_a}\Jo_4^{(i)}(\mathbf{z}) &= \Lo_a^{*}\Lo_a\mu_a, \\ \label{eq:grad-m9}
 \nabla_{\mu_s}\Jo_4^{(i)}(\mathbf{z}) &= \Lo_s^{*}\Lo_s\mu_s.  \,
 \end{align}
(All other partial gradients are  vanishing.)  In the following, let $\nill$ be the number of illuminations, write  $\mathbf{z} = (\mu_a,\mu_s, (\Phi_i,H_i)_{i=1}^\nill)$ and  let $(s_k)_{k\in \N}$ be
a sequence of  step sizes. In this paper, we propose the following instances of the stochastic
proximal gradient method for QPAT based on the multilinear formulation
\eqref{eq:mull}.

 \begin{itemize}
 \item \textbf{MULL-projected stochastic gradient algorithm:}
Here, we consider the form \eqref{eq:pen}.
For  any iteration index $k\in \N$ choose $i(k) \in \{1,\ldots,\nill\}$ and $\ell(k) \in \{1,\ldots,4\}$ and define the sequence of iterates  $(\mathbf{z}^{k})_{k \in \N}$ by
 \begin{equation}\label{eq:mullproj}
\mathbf{z}^{k+1} =
	( \Po_{\dom} \times  \Io) \kl{ \mathbf{z}^k - s_k \nabla \Jo_{\ell (k) }^{(i(k))}(\mathbf{z}^k)}.
\end{equation}
Here, the mapping $\Po_{\dom} \times  \Io$ is the proximal mapping corresponding  to
$\mathbf{z} \mapsto \II_{\dom}(\mu),$
which equals the projection $\Po_{\dom}$  in the $\mu$  component
and equals the identity $\Io$ in the other components.

 \item \textbf{MULL-proximal stochastic gradient algorithm:}
Here, we consider the form \eqref{eq:pen2}.
  For  any iteration index $k\in \N$ choose
  $i(k) \in \{1,\ldots,\nill\}$ and $\ell(k)  \in \{1,\ldots,3\}$
  and define sequence of iterates  $(\mathbf{z}^{k})_{k \in \N}$ by
    \begin{equation} \label{eq:mullprox}
\mathbf{z}^{k+1} =
	 \prox_{s_k \Gr_{\la}} \kl{ \mathbf{z}^k - s_k \nabla \Jo_{\ell (k) }^{(i(k))}(\mathbf{z}^k)} \,.
\end{equation}
  {The second step} implements  the proximal mapping of $\mathbf{z} \mapsto s_k \Gr_{\la}(\mu)$ with $\Gr_{\la}(\mu) = \frac{\la}{2}\norm{\Lo(\mu)}^2  +\II_{\dom}(\mu)$. As in the previous section,
this can be computed with Dykstra's projection algorithm \eqref{eq:dykstra1}--\eqref{eq:dykstra4}.
 \end{itemize}

For better scaling, in our actual numerical implementation,  we  replace  the scalar step
sizes $s_k$ by the adaptive step size rule
\begin{equation} \label{eq:adaptive}
s_k^{i,\ell}   \coloneqq  \argmin \{ \mathbf{z}^{k} - t \nabla\Jo_\ell^{(i)}(\mathbf{z}^{k}) \mid t \in \R \} \,.
\end{equation}
{Note that }computing such step sizes does barely increase the computational time of the stochastic gradient method, since all involved calculations are anyhow necessary for computing the gradient for the iterative update. {In opposition to that,
calculating a similar adaptive step size for the algorithms proposed in Section
{\ref{sec:SPG}} would require evaluation of the forward operators $\Fo_i$ and therefore
would significantly increase the computation time. This might be seen as an additional advantage of the novel MULL formulation {\eqref{eq:mull}} and its regularized version {\eqref{eq:consTik}}.}



\section{Numerical Simulations}
\label{sec:num}

For the Tikhonov approach to multi-source QPAT, the radiative transfer equation is numerically solved by a streamline diffusion finite element method. Solving the RTE is required to evaluate the forward operator $\Fo$ and the gradient $\nabla \Fr$ of the data fidelity term in every iterative step. For~the alternative multilinear approach, these calculations are not necessary. However, the application of the transport operator to $\Phi$ has to be calculated for every update of $\Jo_1$. The simulations are performed on the square domain {$\Om = [\SI{-1}{cm},\SI{1}{cm}]^2$}, where the absorption and the scattering coefficient are~supported.

\subsection{Numerical Solution of the RTE}
\label{sec:RTEnum}
Employing a finite element scheme, we derive the weak formulation of Equation \req{rte} by integrating against a test function $ w \colon  \Om \times \sph^1 \to \R$ and replacing the exact solution $\Phi$ by a linear combination in the finite element space $\Phi^{(h)}= \sum_{i=1}^{N_h}c_i^{(h)}\psi_i^{(h)}(x,\theta) $ as in \cite{HalNeuRab15}. Here, the basis function $\psi_i^{(h)}(x,\theta)$ is the product of a basis function in space and a basis function in velocity. The spatial domain is triangulated uniformly with mesh size $h$ and $P_1$-Lagrangian element function for the spatial and velocity domain. By~choosing the test function $w(x,\theta) = \sum_{j=1}^{N_h}w_j(\psi_j(x,\theta) + D(x,\theta)\theta \cdot \nabla_x \psi_j(x,\theta))$ with streamline diffusion coefficient $D(x,\theta)$, we obtain
\begin{multline}\label{weak}
\int_{\Om}\int_{\sph^1} (D\theta \cdot \psi_i-\psi_i)\theta\cdot\nabla_x \psi_j \rmd \theta \rmd x +
  \int_{\Gamma_+} |\theta \cdot \nu| \rmd \sigma \\
  +  \int_{\Om}\int_{\sph^1} (\mu_a+\mu_s-\mu_s \Ko)(\psi_j+D\theta\cdot\nabla_x\psi_j)\psi_i \rmd \theta \rmd x =
  \int_{\Gamma_-} |\theta \cdot \nu|\psi_i \psi_j\rmd \sigma \,.
\end{multline}
Equation (\ref{weak}) yields a system of linear equations $M^{(h)}c^{(h)}=b^{(h)}$, where evaluating the left-hand side of (\ref{weak}) provides the entries of $M^{(h)}$, the right-hand side gives the components of vector $b^{(h)}$. Note~that the sparsity of matrix $M^{(h)}$ is low and solving the linear system for the Tikhonov approach is very time-consuming. On the other hand, the solution via the MULL formulation requires  only a matrix vector multiplication, since in this case $\Phi^{(h)}$ is an independent variable. Thus, only the application to $\Phi$ has to be calculated and the transport equation does not need to be solved.

\subsection{Test Scenario for Multiple Illumination}

The sample is illuminated in orthogonal direction at the boundaries of {$\Om = [-\SI{1}{cm},\SI{1}{cm}]^2$}. \mbox{In our} simulations, we use $\nill=4$ homogenous illuminations and no internal sources. The illuminations are applied separately from each side (left, right, top and bottom) with acoustic data measured  on a half circle on the same side as the illumination (see Figure~\ref{fig:num}). For the scattering kernel, we use the two-dimensional Henyey--Greenstein kernel,
\begin{equation*}
	k(\theta,\theta')
	\coloneqq
	\frac{1}{2\pi}\frac{1-g^2}{1+g^2-2g\cos( \theta \cdot \theta' ) }
	\quad \text{ for } \theta, \theta' \in \sph^1 \,,
\end{equation*}
where the anisotropy factor is chosen as $g=0.5$ in all our experiments.

\begin{figure}[htb]
\centering 
\includegraphics[width=\linewidth]{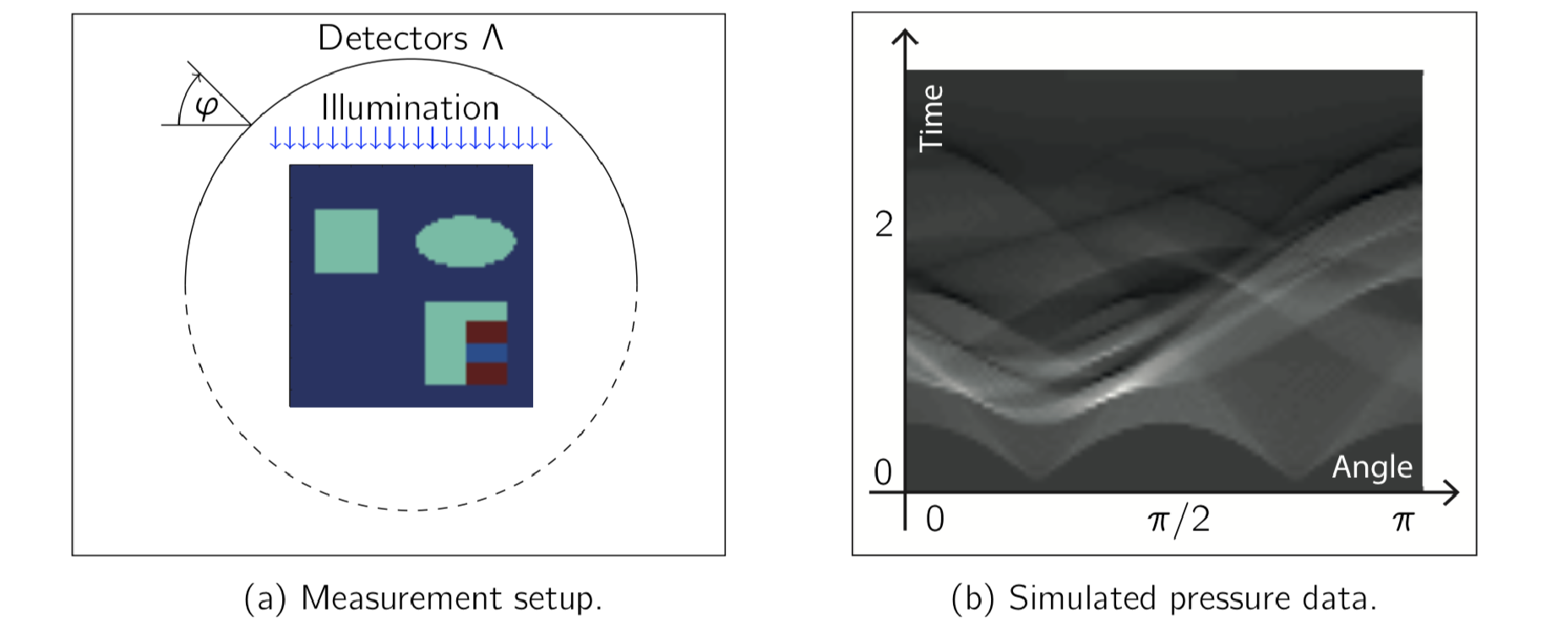}
\caption{(a) The\label{fig:num}  phantom is defined on the square  \cmaa{${\Om = [\SI{-1}{cm},\SI{1}{cm}]^2}$} and the acoustic pressure is measured on a semi-circle
on the side of the illumination. (b) The simulated pressure correspond  to the phantom and the illumination on the right hand side and are represented as gray scale density.}
\end{figure}

For the simulated data, we choose a spatial mesh size $2/100$, in order to discretize the velocity direction the unit circle is divided in $64$ subintervals. In order to avoid inverse crime, for the reconstruction, we use a different spatial mesh  size $h=2/80$ and use $N_\theta = 48 $ velocity directions.
Calculating the simulated data corresponds to evaluating {the forward operators $\Fo_i$
with perpendicular boundary illumination constant along one side of the boundary square,
$\qb(x,\theta) =  \delta(\theta - \theta_i) \chi_{i} (x)\SI{1}{\milli\joule\per\cm}$,
where $\delta$ is an approximation of the Dirac delta function and $\chi_{i}$ the indicator function of side $i$ of $\Om$. In this way, we simulate data}
\begin{equation*}
v_i = \Uo \circ \Ho_i(\mu) + z^{\rm noise}_i  \quad \text{ for } i=1, \dots, 4 \,.
\end{equation*}
{Thereby, the heating operator is computed numerically by solving the RTE as described in  Section~{\ref{sec:RTEnum}}. The wave operator $\wave$ is evaluated by straightforward discretization of the well-known explicit formulas for {\eqref{eq:wave}} that can be found, for example, in} \cite{Joh82,Eva98}.
In the following, we present results for exact data (where $z^{\rm noise}_i=0$) as well as for noisy data.
For the noisy data case, we add $0.5\%$ random noise to the simulated data, i.e., we take the maximum value of the simulated pressure and add white noise
$z^{\rm noise}_i$ with a standard deviation of $0.5\%$ of that maximal value. The phantom, the setup  and the simulated data for one of the four illuminations (top) are  shown in Figures \ref{fig:num} and \ref{fig:phantom}.

\begin{figure}[htb]\centering
\includegraphics[width=0.8\textwidth]{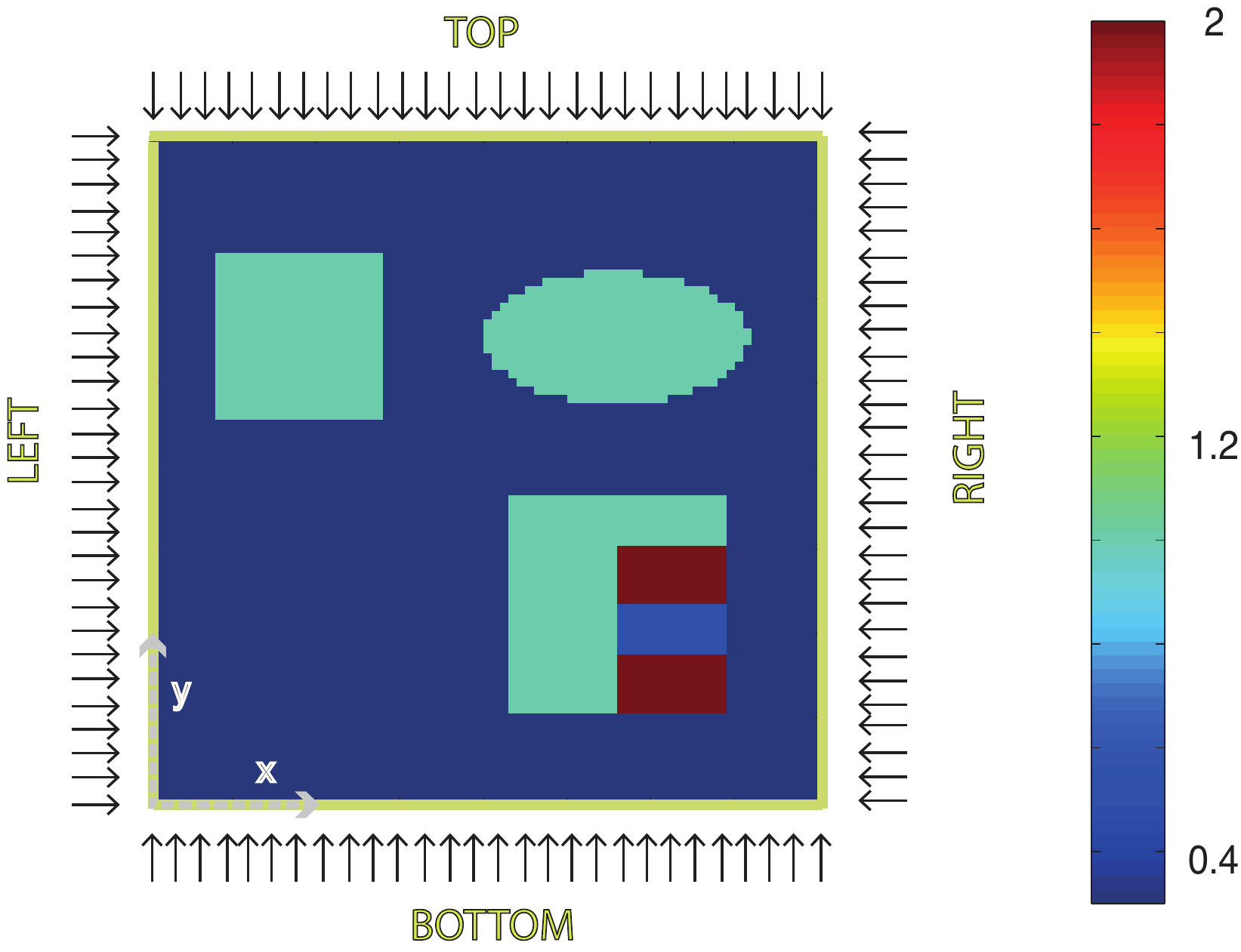}
\caption{\label{fig:phantom} {{Absorption coefficient distribution of the tissue sample used for the numerical examples.}} Background absorption of the tissue is taken as  {${\mu_a=\SI{0.3}{\per \cm}}$}, the blue obstacles have {${\mu_a =\SI{1}{\per\cm}}$} and the red stripes {${\mu_a=\SI{2}{\per\cm}}$}. The area between the red stripes has absorption coefficient {${\mu_a=\SI{0.5}{\per\cm}}$}. The scattering coefficient is constant in the whole sample and chosen to be {${\mu_s =\SI{3}{\per\cm}}$. Illuminations are applied consecutively
from top, right, bottom and left. The corresponding} {boundary sources are given by $\qb(x,\theta) =  \delta(\theta - \theta_i) \chi_{i} (x)\SI{1}{\milli\joule\per\cm}$.} {The $x$- and $y$-axis cover $[\SI{-1}{\cm},\SI{1}{\cm}]$.}}
\end{figure}


\subsection{Numerical Results}

For regularizing the absorption and scattering coefficient, we make use of Laplace regularization and choose $\Lo_a=\Delta $ and $\Lo_s=100\Delta$, respectively.  We assume that the coefficient $\mu$ is known at the boundary of $\Om$ and is therefore used as the starting value of our iterative schemes. Furthermore, we use the boundary value of $\mu$ for regularization; that is, we implement it in the Dykstra projection procedure~\req{dykstra2} by iteratively projecting on the known boundary value. In the following, we discuss the methods that we have outlined in the previous section.

\begin{itemize}
\item \textbf{Standard formulation of QPAT \eqref{eq:ipi}:}
We assume that the scattering coefficient is known and we restrict ourself to reconstructing the absorption coefficient.
Then, the proximal gradient and proximal stochastic gradient algorithm, respectively, read
\begin{align}
\mu_a^{k+1} &= \prox_{s_k \Gr_\la}
\Bigl( \mu_a^k -  \frac{s_k}{4}  \sum_{i=1}^4 \nabla  \Fr_i(\mu_a^k,\mu_s)    \Bigr),
  \\  \label{eq:sg1}
\mu_a^{k+1}  &= \prox_{s_k \Gr_\la}
\Bigl( \mu_a^k -  s_k  \nabla  \Fr_{i(k)}(\mu_a^k,\mu_s)    \Bigr)   \,.
\end{align}
{In contrast,} to the full proximal gradient algorithm, the proximal stochastic gradient algorithm
avoids evaluating the full gradient $\nabla  \Fr$, but selects randomly an illumination number $i\in \{1,\dots,4\}$ for each iterative step.    
Because of formula \req{adjoint}, each iteration of the above procedures requires the calculation of the solution of
the radiative transfer equation $\Phi$ as well of its adjoint $\Phi^{*}$.

The top row in Figure~\ref{fig:IP} shows reconstruction result for the absorption coefficient using the original formulation with  the proximal gradient method with $\lambda = 2 \times 10^{-8}$ and $10$ iterative steps.
The left picture  shows the relative error $ \snorm{\mu_a-\mu_a^k}/{\norm{\mu_a}} $. Note that, in this case, solutions of the RTE and its adjoint have to be computed for four illuminations per iterative step. {The reconstruction results in the bottom row in Figure~{\ref{fig:IP}} are obtained by the proximal stochastic gradient method with $\lambda = 2 \times 10^{-7}$.}
{The regularization parameters $\lambda$ have been selected empirically
as a trade-off between stability and accuracy.}
The total number of iterations is taken as $30$. In each iteration, a illumination pattern is chosen randomly and the computation of RTE and its adjoint is executed only for this single illumination. Therefore, the computational effort for the
proximal stochastic gradient method  is approximately 3/4 of the  proximal gradient algorithm using full gradients.
{For the algorithms  based on the   standard formulation {\eqref{eq:ipi}}, calculating
adaptive step sizes similar to {\eqref{eq:adaptive}} is time-consuming as this requires
another evaluation of  the forward operator $\Fo_i$ and therefore another solution
 of the RTE.
Therefore, we simply use a constant step size rule; in our numerical experiments, it
turned out that $s_k = 0.5$ is a suitable choice.}

\begin{figure}[htb]\centering
\includegraphics[width=\textwidth]{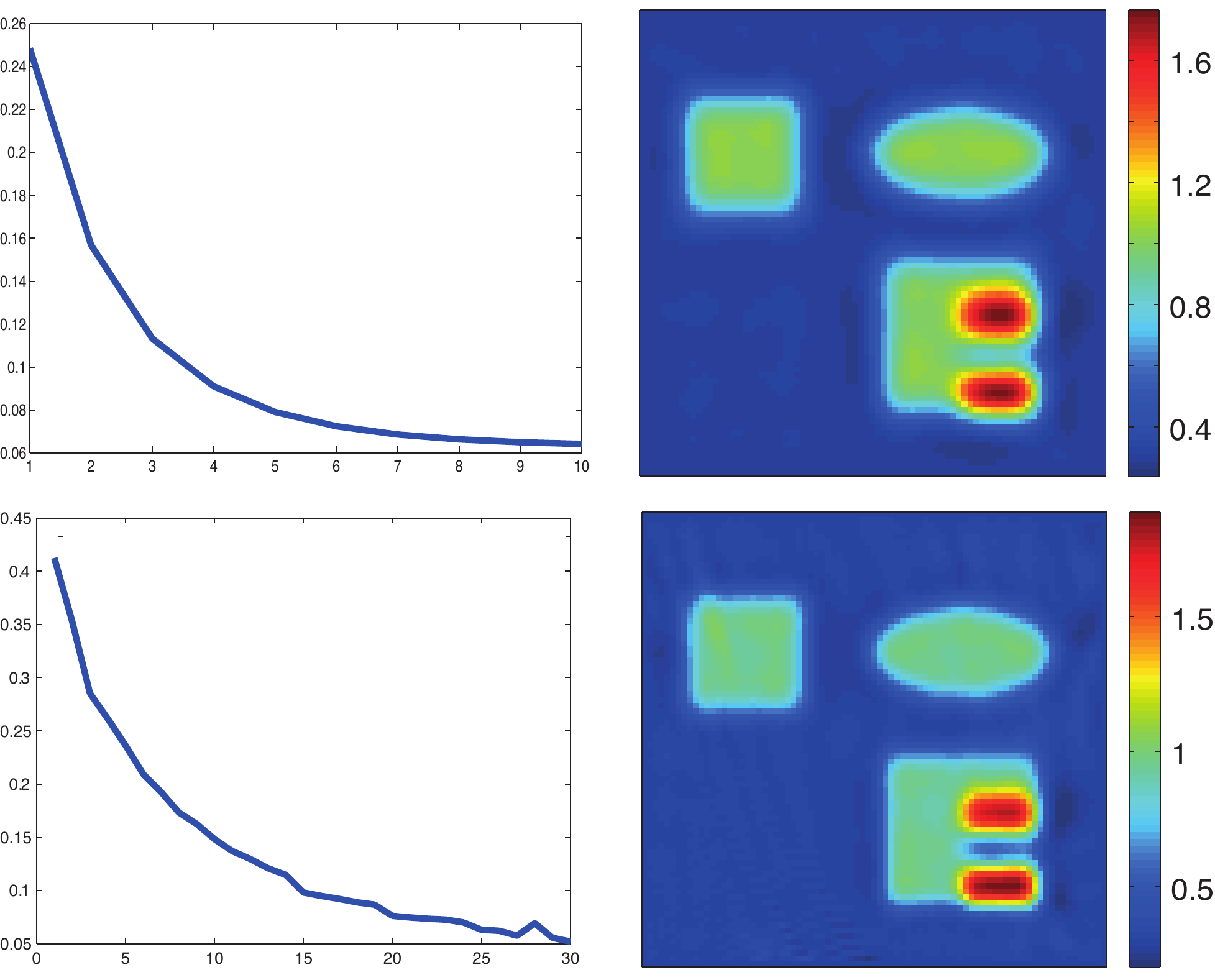}
\caption{{Reconstruction results based on standard  formulation \eqref{eq:ipi}.}\label{fig:IP}
\textbf{Top}: proximal gradient method;
\textbf{Bottom}: proximal stochastic gradient method.
The left images show the relative reconstruction errors of the reconstructed absorption coefficient as
a function of the number of iterations, whereas the right pictures show the result after the final iteration. {(The phantom is as described in Figure~\ref{fig:phantom}.)}}
\end{figure}

\begin{figure}[htb]
\centering
\begin{tikzpicture}[node distance = 2cm, auto]
      \node [sblock] (init) {initialize $\mu_a^0$};
        \node [cloud, left of=init] (data) {input $(v_i)_{i=1}^N$};
    \node [sblock, below of=init, node distance=1.5cm] (identify) {$k \coloneqq 0$};
    \node [sblock, below of=identify, node distance=1.8cm] (select) {select\\
    $i \in \set{1,\dots,\nill}$};
    \node [sblock, below of=select, node distance=1.8cm]  (gradient) {evaluate $\nabla \Fr_{i(k)}(\mu_a^k) $};
    \node [sblock, below of=gradient, node distance=1.8cm]  (update) {update $\mu_a^{k+1}$ by \eqref{eq:sg1}};
     \node [sblock, left of=update, node distance=3cm] (kupdate) {$k \coloneqq k+1$};
    \node [decision, below of=update, node distance=2.3cm] (decide) {$k=k_{\rm max}$?};
    \node [cloud, below of=decide, node distance=2.3cm] (output) {output $\mu_a^{k+1}$};
    \path [line] (init) -- (identify);
    \path [line] (identify) -- (select);
    \path [line] (select) -- (gradient);
    \path [line] (gradient) -- (update);
    \path [line] (update) -- (decide);
    \path [line] (decide) -| node [near start] {no} (kupdate);
    \path [line] (kupdate) |- (select);
    \path [line] (decide) -- node {yes}(output);
    \path [line,dashed] (data) -- (init);
\end{tikzpicture}\hfill
\begin{tikzpicture}[node distance = 2cm, auto]
      \node [block] (init) {initialize $(\mu,\Phi_i, H_i)^0$};
        \node [cloud, right of=init] (data) {input $(v_i)_{i=1}^N$};
    \node [sblock, below of=init, node distance=1.5cm] (identify) {$k \coloneqq 0$};
    \node [sblock, below of=identify, node distance=1.5cm] (select) {select\\
    $i \in \set{1, \dots, \nill}$\\
    $\ell \in \set{1,2,3}$};
    \node [block, below of=select, node distance=2cm]  (update) {update  $(\mu,\Phi_i,H_i)^{k+1}$ by  \eqref{eq:itJ}};
   \node [sblock, below of=update, node distance=1.8cm]  (dykstra) {If $\ell \in\{1,2\}$:  appl. Dykstra};
     \node [sblock, right  of=dykstra, node distance=3cm] (kupdate) {$k \coloneqq k+1$};
    \node [decision, below of=dykstra, node distance=2.1cm] (decide) {$k=k_{\rm max}$?};
    \node [cloud, below of=decide, node distance=2.3cm] (output) {output $(\mu,\Phi_i,H_i)^{k+1}$};
    \path [line] (init) -- (identify);
    \path [line] (identify) -- (select);
    \path [line] (select) --  (update);
    \path [line] (update) -- (dykstra);
    \path [line] (dykstra) -- (decide);
    \path [line] (decide) -| node [near start] {no} (kupdate);
    \path [line] (kupdate) |- (select);
    \path [line] (decide) -- node {yes}(output);
    \path [line,dashed] (data) -- (init);
\end{tikzpicture}
\caption{Flowcharts\label{fig:flowcharts} of  stochastic gradient algorithms for QPAT proposed in this paper.  \textbf{Left}:~algorithm based on the standard formulation \eqref{eq:ipi}.
\textbf{Right}: algorithm based on the novel MULL formulation  \eqref{eq:mull}.
The update  \eqref{eq:sg1} using the standard formulation requires solving the forward RTE and the adjoint RTE, which is not required by  \eqref{eq:itJ} with the  MULL formulation. Simulations are performed with $\nill=4$. }
\end{figure}
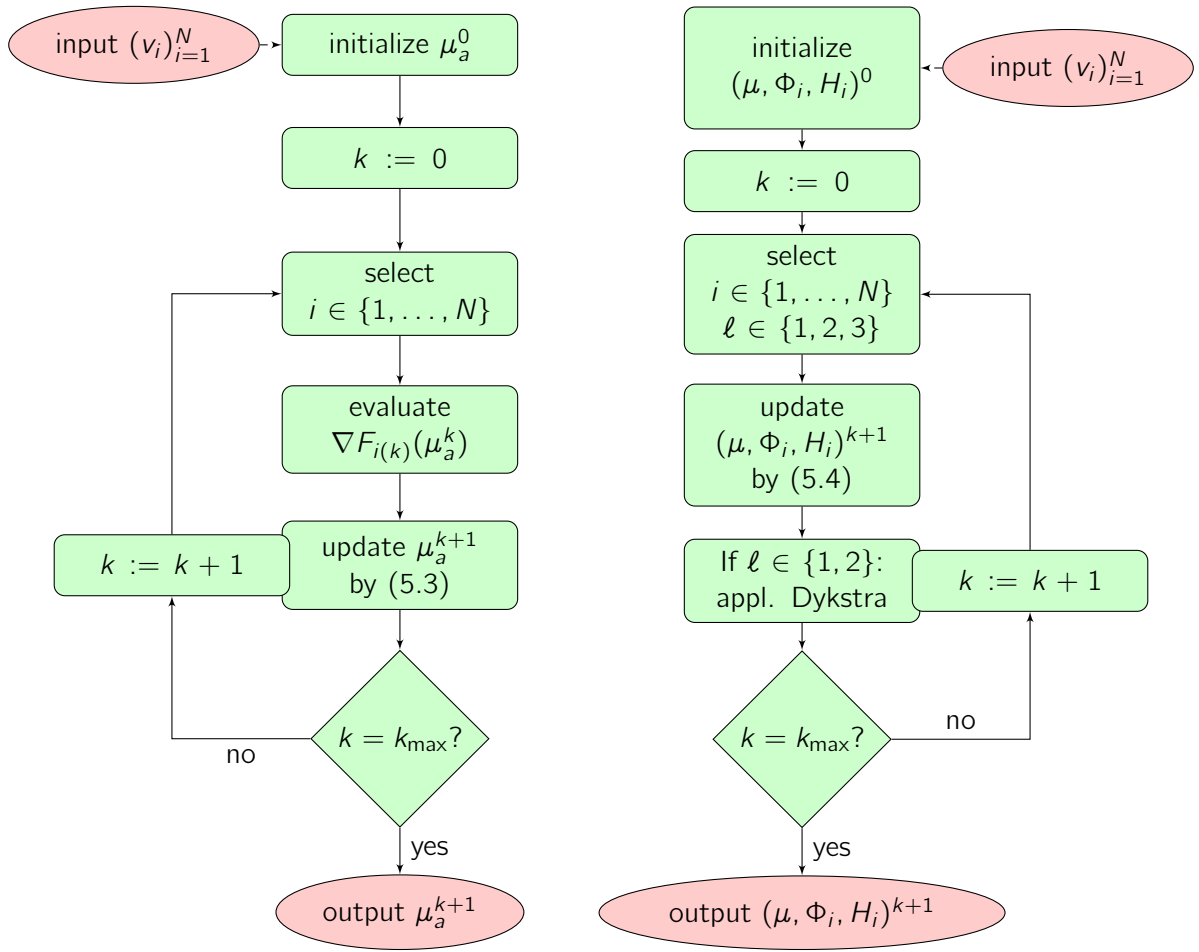

\item \textbf{Novel MULL formulation of QPAT  \eqref{eq:mull}:}
The multilinear approach overcomes the problem of solving the RTE by minimizing  \eqref{eq:pen} or \eqref{eq:pen2}. In both cases, one selects an arbitrary functional and performs  a steepest descent step, resulting in an iterative scheme for the variables $\mu_a$, $\mu_s$, $\Phi$ and $H$. Recall that none of the partial gradients
\eqref{eq:grad-m1}--\eqref{eq:grad-m9} requires solving the  RTE (which is the most
time-consuming part for the standard formulation of QPAT).
In each iterative step, we take a random illumination number $i \in \{1, \dots, 4\}$ and a random functional number $\ell  \in \{1,2,3\}$. The gradient step then consists of the update rule
\begin{equation}\label{eq:itJ}
 \begin{pmatrix}
 \mu\\
 \Phi_i\\
 H_i
  \end{pmatrix}^{k+1}=
   \begin{pmatrix}
 \mu\\
 \Phi_i\\
 H_i
  \end{pmatrix}^{k}
  +s_k \cdot \nabla \Jo_\ell((\mu,\Phi_i,H_i)^k) \,.
\end{equation}
{Dykstra's algorithm}  for smoothing the $\mu$ component is applied after each iterative
step when $\ell\in\{1,2\}$.
Iteration \req{itJ}  contains a gradient step for the RTE. Since one gradient step is not enough to obtain an appropriate approximation to the solution of the transport equation, we apply iteration \req{itJ} $40$ times whenever $\ell=1$ is chosen. In this situation, we apply the Dykstra iteration in the $\mu$ component after these 40 iteration steps, whereas the positivity projection is done in every step. Flowcharts of  the stochastic gradient algorithms (standard and MULL formulations) are shown in Figure~\ref{fig:flowcharts}. 
For the projected stochastic gradient method, regularization of $\mu$ is done by incorporating the regularization functional $\Jo_4$ in the random choice of functionals; see \eqref{eq:pen}. The positivity restriction is realized with the cut projection
$\Po_{\dom} (\mu) = \max\{0,\mu\}$ applied after every iterative step.

Figure~\ref{fig:MLIP} shows reconstructions with the stochastic gradient  methods for the novel MULL formulation of   QPAT \eqref{eq:mull}. The results in the top row are for the MULL-proximal stochastic gradient  algorithm with $\lambda = 2 \times 10^{-8}$  and in the the bottom row results  for the MULL-projected stochastic gradient method with  $\lambda = 2 \times 10^{-8}$ are shown. In both cases, we used $1000$ iterations.

\begin{figure}[htb]\centering
\includegraphics[width=0.85\textwidth]{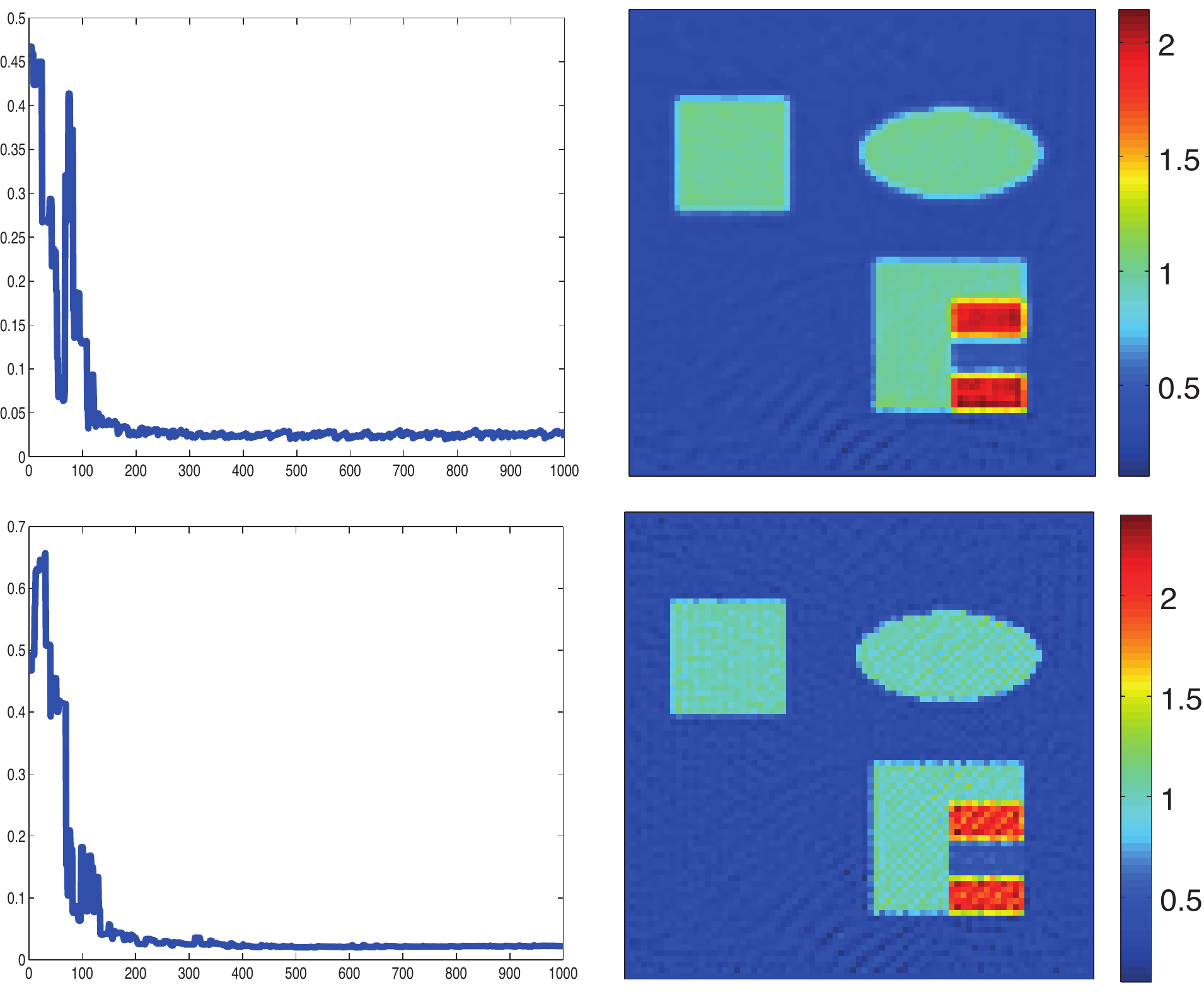}
\caption{{{Reconstruction results based on the novel MULL formulation} \eqref{eq:mull}.}\label{fig:MLIP}
\textbf{Top}: MULL-proximal stochastic gradient method based  on the decomposition  \eqref{eq:pen}.
\textbf{Bottom}: MULL-projected stochastic gradient method based  on the decomposition  \eqref{eq:pen2}.
The left images show the relative reconstruction errors of the reconstructed absorption coefficient as a function of the number of iterations, whereas the right pictures show the results after the last iterations. {(The phantom is as described in Figure~\ref{fig:phantom}.)}}
\end{figure}

\end{itemize}

\begin{remark}\label{rem:effort}
Note that in the stochastic gradient methods for the novel MULL formulation of QPAT
 calculating the matrix vector product $M^{(h)} \cdot \Phi$ is the most costly part.
 In contrast, the standard formulation \eqref{eq:ipi} requires the solution of the system $M^{(h)}c{(h)} = b^{(h)}$. Since the matrix $M^{(h)}$ is sparse only in its spatial domain, this is very
 time-consuming. On the other hand, the  matrix $M^{(h)}$ (which is  a discretization of $ \theta \cdot \nabla_x +  \mua  + \mus (\Io-\Ko)$) has a   simple dependence on  the variables
 $\mu_a, \mu_s$. We therefore can compute the velocity entries of $M^{(h)}$ at the beginning of the iterative process  to save computation time.
 \end{remark}

The reconstruction times for the final reconstructions using all methods described above
are shown in Table~\ref{tab:reconstructiontimes}. For the standard formulation of QPAT, the reconstruction times
seem to be in accordance with reported results using gradient or Newton-type methods for QPAT
(see, for example, \cite{SarTarCoxArr13}.)  The methods based on the new
MULL formulation \eqref{eq:mull}  (after 1000 iterations)
are faster than the methods based on
the standard formulation  \eqref{eq:ipi} of QPAT (after 10, respectively, 40 iterations).
From~the relative reconstruction errors shown in Figures \ref{fig:IP} and \ref{fig:MLIP}, one notices that,
opposed to the methods based on \eqref{eq:ipi},
 the~methods using the MULL formulation could even be stopped much earlier while still obtaining a comparable
 reconstruction quality. We roughly estimate a speedup of a factor 10 using the novel MULL formulation instead of the
standard formulation of QPAT.

\begin{table}[htb]
\centering
\begin{tabular}{ c  @{\hspace{0.25cm}} c @{\hspace{0.25cm}} c  @{\hspace{0.25cm}} c   @{\hspace{0.25cm}}  c   }
\toprule
\textbf{Algorithm}   &  \textbf{Model} & \textbf{Update}  &  \textbf{ No. Iterations}  &   \textbf{Reconstruction  Time}  \\
\midrule
Proximal gradient  & \eqref{eq:ipi} &  \eqref{eq:PGla}   & 10   & \SI{27.2}{h}    \\
Proximal stochastic  gradient  & \eqref{eq:ipi} &  \eqref{eq:proxsg}  &  30  &  \SI{24.4}{h}   \\
MULL-proximal stochastic gradient & \eqref{eq:mull}  &  \eqref{eq:mullproj} & 1000  &    \SI{14.7}{h} \\
MULL-projected stochastic gradient   & \eqref{eq:mull}  & \eqref{eq:mullprox} & 1000 &   \SI{11.9}{h} \\
\bottomrule
\end{tabular}
\caption{Reconstruction times for all methods. Recall that one iteration  of the proximal stochastic gradient method is approximately four times cheaper than
one iteration of the full proximal gradient method (both based on \eqref{eq:ipi}). Further recall
that one step in the methods based on the {MULL} formulation \eqref{eq:mull} is much less time consuming
 than for the methods  based on  \eqref{eq:ipi}; see Remark~\ref{rem:effort}.
 \label{tab:reconstructiontimes}}\end{table}

In Figure \ref{fig:noisy}, we show results for  noisy data using the proximal gradient method based on
the standard formulation~\eqref{eq:ipi} (top) and the proximal stochastic gradient method using the MULL formulation for QPAT (bottom).
The regularization parameter is  chosen as in the exact data case. {Finally, in Figure {\ref{fig:two}}, we show reconstruction results using only
two consecutive illuminations applied from the top and from the left with noisy data. We use 10 iterations for
the proximal gradient algorithm based on {\eqref{eq:ipi}} (shown in left image in Figure {\ref{fig:two}}) and 500 iterations for the stochastic gradient algorithms based on the  MULL formulation~{\eqref{eq:mull}} (shown in the right image in Figure {\ref{fig:two}}).}

\begin{figure}[htb]\centering
\includegraphics[width=0.85\textwidth]{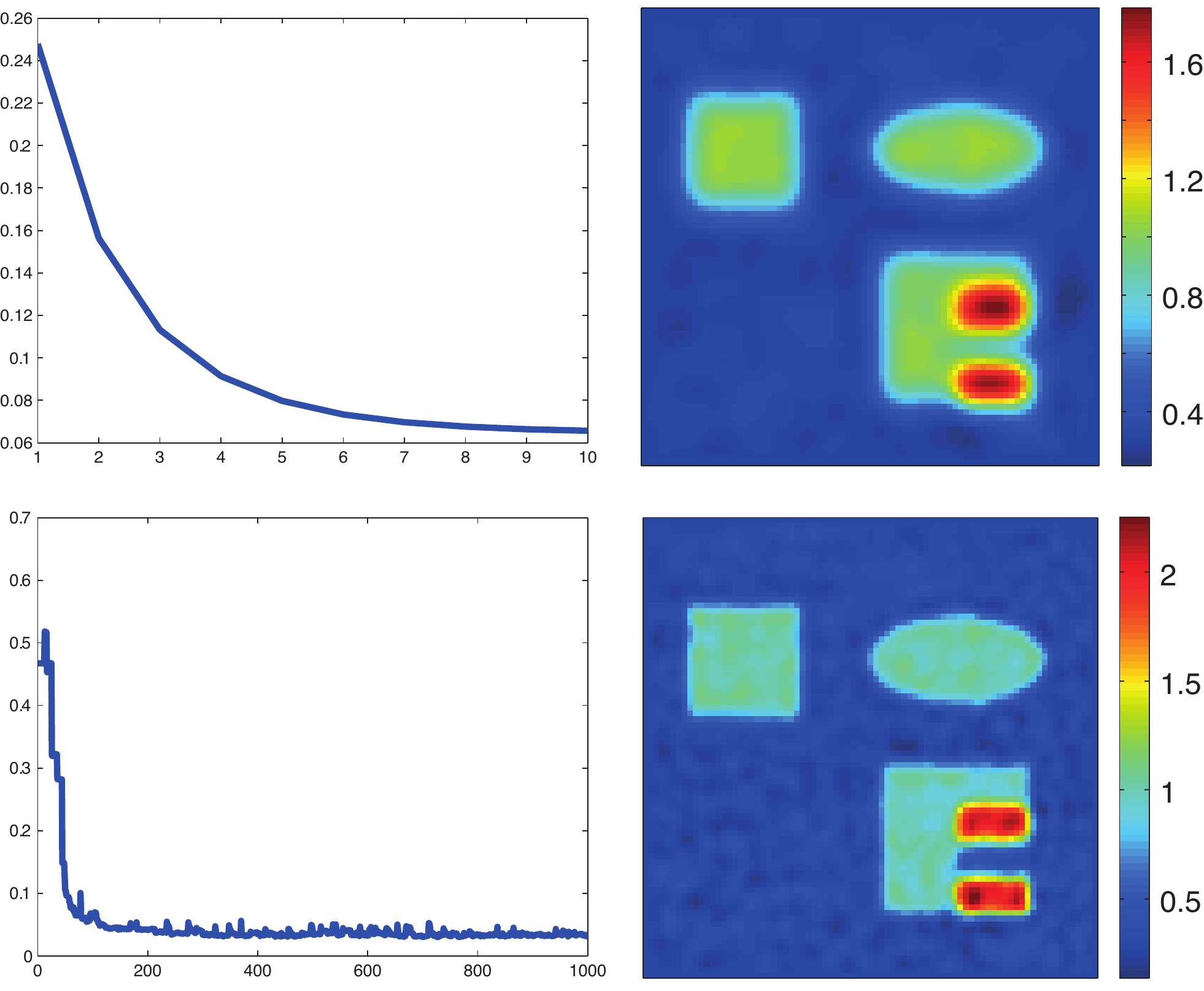}
\caption{{{Reconstruction results from noisy data.}}\label{fig:noisy}
\textbf{Top}: Proximal gradient method based on  \eqref{eq:ipi}.
\textbf{Bottom}: MULL-proximal stochastic gradient method.
The left images show the relative reconstruction errors of the reconstructed absorption coefficient as
a function of the number of iterations, whereas the right pictures show the results after the last iterations. {(The phantom is as described in  Figure~\ref{fig:phantom}.)}}
\end{figure}

\begin{figure}[htb]\centering
\includegraphics[width=\textwidth]{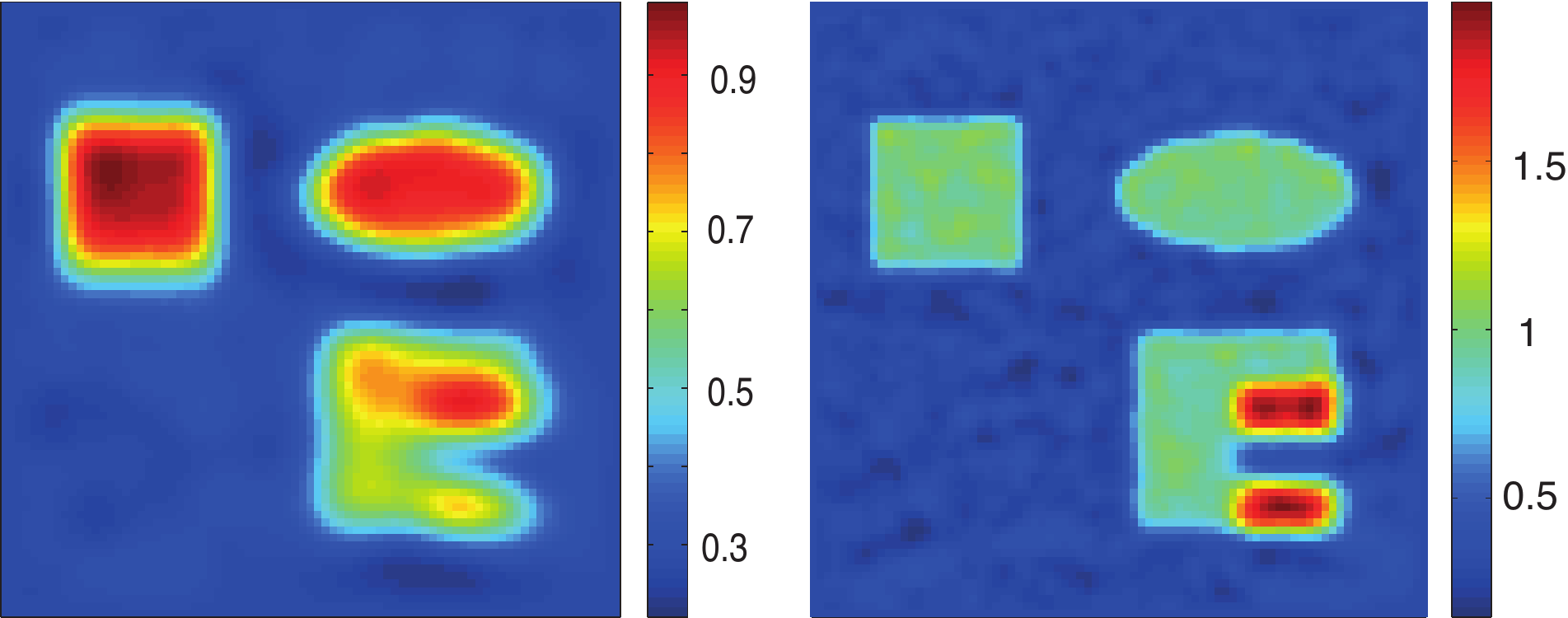}
\caption{{{Reconstruction results from noisy data for two illuminations}.}\label{fig:two}
\textbf{Left}: proximal gradient method based on  \eqref{eq:ipi} using 10 iterations.
\textbf{Right}: MULL-proximal stochastic gradient method using only 500 iterations.
The phantom is as described in Figure~\ref{fig:phantom} and, for the
reconstruction methods, we use two consecutive illuminations (from the top and from the left). The reconstruction time has been about \SI{14}{h} for the method based on the
standard formulation  \eqref{eq:ipi} and \SI{3}{h} for the proposed MULL-proximal stochastic gradient method.}
\end{figure}

\section{Conclusions}
\label{sec:conclusion}

In this paper, we developed  efficient proximal stochastic gradient methods for image reconstruction in multi-source
QPAT. We used the RTE as an accurate model for light transport and employed the single stage approach
for QPAT introduced in \cite{HalNeuRab15}. One class of the proximal stochastic gradient methods has
been developed based on the standard formulation for QPAT given in~\eqref{eq:ipi}. Additionally, we
developed another class using proximal stochastic gradient methods for the new MULL formulations
\eqref{eq:mull} and \eqref{eq:consTik} for QPAT.
Besides proposing proximal stochastic gradient methods for QPAT, we also
consider the formulations  \eqref{eq:mull} and \eqref{eq:consTik} as the main contributions of the
present article.
{These new formulations avoid the time-consuming evaluation of the RTE at each iteration and allow for treating the QPAT problem as a constrained optimization problem, which enables the use of a variety of numerical algorithms. Here, we used a penalty
approach in  combination with stochastic gradient methods for the solution.}
Future work will be done in the direction of developing new algorithms based on \eqref{eq:mull} and
\eqref{eq:consTik}. Additionally, we will investigate the use of different regularization
terms in~\eqref{eq:consTik}. Finally,~the  theoretical convergence analysis of proximal gradient algorithms and other iterative algorithms for solving \eqref{eq:mull} will be the subject of future research.

\section*{Acknowledgments}

M.H. acknowledges support of the Austrian Science Fund (FWF),
project P 30747. 
S.R. is a recipient of a {DOC} Fellowship of the
Austrian Academy of Sciences (OEAW)  and acknowledges support of the OEAW for his PhD project.
We thank  the referees  for helpful comments that increased the readability of the
this manuscript, as well as pointing out some interesting references  to us.

\end{document}